\newtheorem{thm}{Theorem}[section]
\newtheorem{cor}[thm]{Corollary}
\newtheorem{lem}[thm]{Lemma}
\newtheorem{defn}[thm]{Definition}
\newtheorem{rem}[thm]{\bf{Remark}}
\numberwithin{equation}{section}
\begin{document}

\title{generalized weighted EP elements in Banach algebras}

\author{Huanyin Chen}
\author{Marjan Sheibani}
\address{School of Big Data, Fuzhou University of International Studies and Trade, Fuzhou 350202, China}
\email{<huanyinchenfz@163.com>}
\address{Farzanegan Campus, Semnan University, Semnan, Iran}
\email{<m.sheibani@semnan.ac.ir>}

\subjclass[2020]{16U90, 15A09, 16W10.}\keywords{EP element; *-DMP element; wighted EP element; weighted *-DMP element; weighted generalized Drazin inverse; generalized weighhted EP element; Banach algebra.}

\begin{abstract} We propose a new class of generalized inverses with weights, which represent a natural extension of EP (Moore-Penrose) and *-DMP (Drazin-Moore-Penrose) elements in a Banach *-algebra. This paper presents various characteristics of weighted EP elements. Moreover, we characterize the weighted EP element through the core-EP decomposition and a polar-like property. Finally, we explore weighted *-DMP elements and uncover several new properties of *-DMP elements.\end{abstract}

\maketitle

\section{Introduction}

Let $\mathcal{A}$ be a Banach algebra with involution $*$. An element $a$ in $\mathcal{A}$ is an EP element if there exists some $x\in \mathcal{A}$ such that
$$ax^2=x, xa^2=a, (ax)^*=ax, (xa)^*=xa.$$ Let ${\Bbb C}^{n\times n}$ be the Banach algebra of all $n\times n$ complex matrices with conjugate transpose $*$.
$A\in {\Bbb C}^{n\times n}$ is an EP matrix if and only if $rank(A)=rank(A^*)$. Evidently, $a$ is an EP element if and only if there exists some $x\in \mathcal{A}$ such that $ax^2=x, xa^2=a, (ax)^*=xa$ if and only if there exists $x\in \mathcal{A}$ such that $a^2x=a, ax=xa, (ax)^*=ax$ if and only if $a\in \mathcal{A}^{\#}$ and $(aa^{\#})^*=aa^{\#}$ (~\cite{B, H, K,M2,W,X,Z}).
Here,  $a\in \mathcal{A}$ has group inverse provided that there exists $x\in \mathcal{A}$ such that $$ax^2=x, ax=xa, a=xa^2.$$ Such $x$ is unique if exists, denoted by $a^{\#}$, and called the group inverse of $a$.

An element $a$ in a Banach *-algebra $\mathcal{A}$ is a *-DMP element if there exist $m\in {\Bbb N}$ and $x\in \mathcal{A}$ such that
$ax^2=x, (ax)^*=ax, (xa)^*=xa, a^n=xa^{n+1}$. As is well known, $a\in \mathcal{A}$ is *-DMP if and only if $a^m\in \mathcal{A}$ is EP for some $m\in {\Bbb N}$ (see~\cite{C3,G2,G3,GZ}). It was proved tht $a\in \mathcal{A}$ has *-DMP if and only if there exist $n\in {\Bbb N}$ and $x\in \mathcal{A}$ such that
$ax^2=x, (ax)^*=xa, a^m=xa^{m+1}.$  We propose a novel class of generalized inverses with weights, which represent a natural extension of EP (Moore-Penrose) and *-DMP (Drazin-Moore-Penrose) elements in a Banach *-algebra.

\begin{defn} An element $a\in \mathcal{A}$ is a $w$-EP element if there exists $x\in \mathcal{A}$ such that
$$a(wx)^2=x, x(wa)^2=a, (awxw)^*=awxw, (xwaw)^*=xwaw.$$\end{defn}

In Section 2, we present various characterizations of weighted EP elements. Let $\mathcal{A}_w^{\tiny{e}}$ be the set of all $w$-EP elements in $\mathcal{A}$. Let $\mathcal{A}^{qnil}=\{x\in \mathcal{A}~\mid~ \lim\limits_{n\to \infty}\parallel x^n\parallel^{\frac{1}{n}}=0\}.$ Evidently,
$x\in \mathcal{A}^{qnil}$ if and only if $1+\lambda x\in \mathcal{A}^{-1}$ for any $\lambda\in {\Bbb C}$. Set $\mathcal{A}_w^{qnil}
=\{y\in \mathcal{A}~\mid~ yw\in \mathcal{A}^{qnil}\}$.

\begin{defn} An element $a\in \mathcal{A}$ is a generalized $w$-EP element if there exists $x\in \mathcal{A}$ such that $$\begin{array}{c}
a(wx)^2=x, (awxw)^*=awxw, (xwaw)^*=xwaw,\\
\lim\limits_{n\to \infty}||(aw)^n-xw(aw)^{n+1}||^{\frac{1}{n}}=0.
\end{array}$$\end{defn}

In Section 3, we provide characterizations of generalized weighted EP elements through the core-EP decomposition and a polar-like property.
Let $\mathcal{A}_w^{\tiny\textcircled{e}}$ be the set of all generalized $w$-EP elements in $\mathcal{A}$.
We prove that $a\in \mathcal{A}_w^{\tiny\textcircled{e}}$ if and only if there exist $x,y\in \mathcal{A}$ such that $$a=x+y, x^*y=ywx=0, x\in \mathcal{A}_w^{\tiny{e}},~y\in \mathcal{A}_w^{qnil}$$ if and only if there exists a projection $p\in \mathcal{A}$ (i.e., $p^2=p=p^*$) such that
$$aw+p\in \mathcal{A}^{-1}, 1-p\in \mathcal{A}w, awp=paw\in \mathcal{A}^{qnil}.$$

In Section 4, we are concerned with the representations of generalized weighted EP elements by using the weighted generalized Drazin inverse.
Here, an element $a\in \mathcal{A}$ has generalized $w$-Drazin inverse $x$ if there exists unique $x\in \mathcal{A}$ such that
 $$awx=xwa, xwawx=x ~\mbox{and}~ a-awxwa\in \mathcal{A}^{qnil}.$$ We denote $x$ by $a^{d,w}$ (see~\cite{M}). Let $\mathcal{A}^{d,w}$ be the set of
 all generalized $w$-Drazin invertible elements in $\mathcal{A}$. We prove that $a\in \mathcal{A}_w^{\tiny\textcircled{e}}$ if and only if $a\in \mathcal{A}^{d,w}$ and $a^{d,w}\in \mathcal{A}_w^{\tiny{e}}$. In this case, $a_w^{\tiny\textcircled{e}}=[a^{d,w}w]^2[a^{d,w}]_w^{\tiny{e}}.$

For further study *-DMP elements, we introduce weighted *-DMP elements in a Banach *-algebra.

\begin{defn} An element $a\in \mathcal{A}$ is a w-*-DMP element if there exists $x\in \mathcal{A}$ such that
$$a(wx)^2=x, (awxw)^*=awxw, (xwaw)^*=xwaw, (aw)^n=xw(aw)^{n+1}$$ for some $n\in {\Bbb N}$.\end{defn}

Finally, in Section 5, weighted *-DMP elements in a Banach *-algebra are characterized. New properties of *-DMP elements are thereby obtained. Many properties of EP elements and *-DMP elements are extended in wiader cases.

\section{Weighted EP elements}

An element $a\in \mathcal{A}$ has $w$-group inverse if there exist $x\in \mathcal{A}$ such that
$$awxwa=a, xwawx=x, awx=xwa.$$ The preceding $x$ is unique if it exists, and we denote it by $a_w^{\#}$. The set of all generalized $w$-group invertible elements in $\mathcal{A}$ is denoted by $\mathcal{A}_w^{\#}$.

\begin{thm} Let $a,w\in \mathcal{A}$. Then the following are equivalent:\end{thm}
\begin{enumerate}
\item [(1)] $a\in \mathcal{A}_w^{\tiny{e}}$.
\vspace{-.5mm}
\item [(2)] There exists $x\in \mathcal{A}$ such that
$$a(wx)^2=x, x(wa)^2=a, (xwaw)^*=xwaw.$$
\vspace{-.5mm}
\item [(3)] $a\in \mathcal{A}_w^{\#}$ and $(awa_w^{\#}w)^*=awa_w^{\#}w$.
\end{enumerate}
\begin{proof} $(1)\Rightarrow (2)$ This is trivial.

$(2)\Rightarrow (3)$ Let $z=(xw)^2a$. Then we check that
$$\begin{array}{rll}
awzwa&=&aw(xw)^2awa=a(wx)^2(wa)^2=x(waw)^2=a,\\
zwawz&=&(xw)^2awaw(xw)^2a=xw[x(wa)^2]wxwxwa\\
&=&xw[a(wx)^2]wa=(xw)^2a=x,\\
awz&=&aw(xw)^2a=[a(wx)^2]wa=xwa\\
&=&xwx(wa)^2=(xw)^2awa=zwa.
\end{array}$$ Hence $a\in \mathcal{A}_w^{\#}$ and
$a_w^{\#}=(xw)^2a$. Thus, $awa_w^{\#}w=aw(xw)^2aw=[a(wx)^2]waw=xwaw$. Therefore $(awa_w^{\#}w)^*=(xwaw)^*=xwaw=awa_w^{\#}w$, as required.

$(3)\Rightarrow (1)$ Let $x=a_w^{\#}$. Then $$awxwa=a, xwawx=x, awx=xwa.$$ Hence $a(wx)^2=(awx)wx=(xwa)wx=x$ and $x(wa)^2=(xwa)wa$ $=(awx)wa=a$.
By hypothesis, $(awxw)^*=awxw$. Moreover, $xwaw=awxw$, and then $(xwaw)^*=xwaw$.
This implies that $x=a_w^{\tiny{e}}$, as required.\end{proof}

\begin{cor} (see~\cite[Theorem 7.3]{KP}) Let $a\in \mathcal{A}$. Then $a\in \mathcal{A}^{\tiny{EP}}$ if and only if\end{cor}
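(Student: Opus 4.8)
The plan is to obtain this corollary directly from Theorem~2.1 by specializing the weight to $w=1$, the identity of the (unital) algebra $\mathcal{A}$. First I would verify that Definition~1.1 collapses to the classical EP condition when $w=1$: the four defining equations $a(wx)^2=x$, $x(wa)^2=a$, $(awxw)^*=awxw$, $(xwaw)^*=xwaw$ become respectively $ax^2=x$, $xa^2=a$, $(ax)^*=ax$, $(xa)^*=xa$, so that $\mathcal{A}_1^{\tiny{e}}=\mathcal{A}^{\tiny{EP}}$. This is the step that identifies the left-hand side of the claimed equivalence with the left-hand side of Theorem~2.1.

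Next I would record the analogous collapse for the $w$-group inverse. With $w=1$ the defining relations $awxwa=a$, $xwawx=x$, $awx=xwa$ reduce to $axa=a$, $xax=x$, $ax=xa$, which is precisely the defining system for the ordinary group inverse; hence $\mathcal{A}_1^{\#}=\mathcal{A}^{\#}$ and $a_1^{\#}=a^{\#}$. Consequently the Hermitian condition $(awa_w^{\#}w)^*=awa_w^{\#}w$ of part~(3) becomes $(aa^{\#})^*=aa^{\#}$, and the intermediate condition of part~(2) becomes $ax^2=x$, $xa^2=a$, $(xa)^*=xa$.

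With these two identifications in hand, each of the three items of Theorem~2.1 translates verbatim into the corresponding statement about EP elements, and the chain of equivalences $(1)\Leftrightarrow(2)\Leftrightarrow(3)$ is inherited unchanged. The only point that needs a word of care is that the substitution $w=1$ is legitimate, i.e. that $\mathcal{A}$ is unital so that $1$ is an admissible weight; this is already implicit in the use of $\mathcal{A}^{-1}$ and of the group inverse throughout the section, so no genuine obstacle arises. I would therefore expect the entire argument to be a short transcription of Theorem~2.1, with no delicate step.
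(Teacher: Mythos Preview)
Your proposal is correct and matches the paper's own proof exactly: the paper simply states ``This is obvious by choosing $w=1$ in Theorem 2.1.'' Your additional verification that the weighted definitions collapse to the classical ones under $w=1$ is a harmless elaboration of the same one-line specialization.
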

\begin{enumerate}
\item [(1)] $a\in \mathcal{A}^{\#};$
\vspace{-.5mm}
\item [(2)] $(aa^{\#})^*=aa^{\#}$.
\end{enumerate}
\begin{proof} This is obvious by choosing $w=1$ in Theorem 2.1.\end{proof}

We are ready to prove:

\begin{thm} Let $a,w\in \mathcal{A}$. Then the following are equivalent:\end{thm}
\begin{enumerate}
\item [(1)] $a\in \mathcal{A}_w^{\tiny{e}}$.
\vspace{-.5mm}
\item [(2)] There exists $x\in \mathcal{A}$ such that
$$(aw)^2x=a, awx=xwa, (awxw)^*=awxw.$$
\item [(3)] There exists $x\in \mathcal{A}$ such that
$$(aw)^2x=a, (xw)^2a=x, (awxw)^*=awxw.$$
\end{enumerate}
\begin{proof}  $(1)\Rightarrow (2)$ This is trivial by Theorem 2.1.

$(2)\Rightarrow (1)$ Let $z=a(wx)^2$. Since $awx=xwa$, we check that
$$\begin{array}{rll}
awzwa&=&awa(wx)^2wa=[(aw)^2x]wxwa=awxwa=(aw)^2x=a,\\
zwawz&=&a(wx)^2wawa(wx)^2=a(wx)^2wxw[(aw)^2x]\\
&=&a(wx)^2wxwa=[(aw)^2x]wxwx=a(wx)^2=z,\\
awz&=&awa(wx)^2=[(aw)^2x]wx=awx\\
&=&[(aw)^2x]wx=a(wx)^2wa=zwa.
\end{array}$$ Hence $a\in \mathcal{A}_w^{\#}$ and
$a_w^{\#}=a(wx)^2$. Hence, $awa_w^{\#}w=awa(wx)^2w=[(aw)^2x]wxw=awxw$. Therefore $(awa_w^{\#}w)^*=(awxw)^*=awxw=awa_w^{\#}w$.
In light of Theorem 2.1, $a\in \mathcal{A}_w^{\tiny{e}}$.

$(1)\Rightarrow (3)$ Let $x=a_w^{\#}$. In view of Theorem 2.1, $x=a_w^{\tiny{e}}$. By the argument above,
we have $$(aw)^2x=a, awx=xwa, (awxw)^*=awxw.$$
Moreover, we have $(xw)^2a=xw(xwa)=xw(awx)=(xwa)wx=(awx)(wx)=a(wx)^2=x$, as required.

$(3)\Rightarrow (1)$ Set $z=a(wx)^2$. Then we verify that
$$\begin{array}{rll}
awz&=&awa(wx)^2=[(aw)^2x]wx=awx\\
&=&aw[(xw)^2a]=a(wx)^2wa=zwa,\\
(aw)^2z&=&(awz)wz=(zwa)wa(wx)^2=zw[(aw)^2x]wx\\
&=&(zwa)wx=(awx)wx=a(wx)^2=z,\\
awzw&=&(awz)w=(awx)w=awxw,\\
(awzw)^*&=&awzw.
\end{array}$$ This completes the proof by Theorem 2.1.\end{proof}

\begin{cor} (see~\cite[Theorem 2.7 and Theorem 2.11]{X}) Let $a\in \mathcal{A}$. Then the following are equivalent:\end{cor}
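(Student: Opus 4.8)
The plan is to read this off as the $w=1$ specialization of Theorem 2.3, in exactly the way Corollary 2.2 was obtained from Theorem 2.1. First I would set $w=1$ throughout Theorem 2.3 and record what each of the three conditions becomes. The defining relations of a $w$-EP element in Definition 1.1 collapse at $w=1$ to $ax^2=x$, $xa^2=a$, $(ax)^*=ax$, $(xa)^*=xa$, which is precisely the definition of an EP element recalled in the introduction; hence $\mathcal{A}_1^{\tiny{e}}=\mathcal{A}^{\tiny{EP}}$, and condition (1) of Theorem 2.3 turns into $a\in\mathcal{A}^{\tiny{EP}}$.

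Next I would substitute $w=1$ into conditions (2) and (3) of Theorem 2.3 and simplify $aw$ to $a$, $xw$ to $x$, etc. Condition (2) becomes: there exists $x\in\mathcal{A}$ with $a^2x=a$, $ax=xa$, $(ax)^*=ax$. Condition (3) becomes: there exists $x\in\mathcal{A}$ with $a^2x=a$, $x^2a=x$, $(ax)^*=ax$. These are exactly the two characterizations of EP elements recorded in \cite[Theorem 2.7 and Theorem 2.11]{X}, so the stated equivalence follows immediately from Theorem 2.3 with no additional computation.

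The only step that is not purely mechanical is the identification $\mathcal{A}_1^{\tiny{e}}=\mathcal{A}^{\tiny{EP}}$, and even this is routine: it amounts to checking that the four weighted relations in Definition 1.1 reduce term by term to the four unweighted EP relations when the weight is the identity. I expect no genuine obstacle here, since all of the substantive content has already been carried by the weighted Theorem 2.3; the corollary is a transcription of that result under $w=1$ rather than a fresh argument, and its proof can be stated in a single line invoking Theorem 2.3.
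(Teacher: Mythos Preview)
Your proposal is correct and matches the paper's own proof exactly: the paper simply writes ``This is obvious by choosing $w=1$ in Theorem 2.3.'' Your more detailed unpacking of how each condition specializes under $w=1$ is accurate and is precisely the intended one-line derivation.
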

\begin{enumerate}
\item [(1)] $a\in \mathcal{A}^{\tiny{EP}}$.
\vspace{-.5mm}
\item [(2)] There exists $x\in \mathcal{A}$ such that
$$a^2x=a, ax=xa,(ax)^*=ax.$$
\vspace{-.5mm}
\item [(3)] There exists $x\in \mathcal{A}$ such that
$$a^2x=a, x^2a=x, (ax)^*=ax.$$
\end{enumerate}
\begin{proof}  This is obvious by choosing $w=1$ in Theorem 2.3.\end{proof}

\begin{cor} Let $a,w\in \mathcal{A}$. Then the following are equivalent:\end{cor}
\begin{enumerate}
\item [(1)] $a\in \mathcal{A}_w^{\tiny{e}}$.
\vspace{-.5mm}
\item [(2)] $a\in \mathcal{A}_w^{\#}$ and $a\mathcal{A}=(aw)^*\mathcal{A}$.
\end{enumerate}
\begin{proof}  $(1)\Rightarrow (2)$ In view of Theorem 2.1, $a\in \mathcal{A}_w^{\#}, x(wa)^2=a$ and $(xwaw)^*=xwaw$ for some $x\in \mathcal{A}$.
Hence, $a=(xwaw)a=(xwaw)^*a=(aw)^*(xw)^*a\in (aw)^*\mathcal{A}$. By virtue of Theorem 2.3, there exists some $z\in \mathcal{A}$ such that
$(aw)^2z=a$ and $(awzw)^*=awzw$, and then $aw(awzw)=aw$. hence, $(aw)^*=(awzw)^*(aw)^*=awzw(aw)^*\in a\mathcal{A}$. Therefore $a\mathcal{A}=(aw)^*\mathcal{A}$, as required.

$(2)\Rightarrow (1)$ Set $x=a_w^{\#}$. Then $a=awxwa$ and $awx=xwa$; and so
$(1-awxw)a=0$. Since $a\mathcal{A}=(aw)^*\mathcal{A}$, we have $(1-awxw)(aw)^*=0$. Thus $(aw)^*=awxw(aw)^*$.
We infer that $(xwaw)^*=(aw)^*(xw)^*=awxw(aw)^*(xw)^*=awxw(xwaw)^*$; hence,
$(awxw)^*=awxw(awxw)^*$. Therefore $awxw=(awxw)(awxw)^*=(awxw)^*$. Accordingly, $a\in \mathcal{A}_w^{\tiny{e}}$ by Theorem 2.3.
\end{proof}

\begin{thm} Let $a,w\in \mathcal{A}$. Then the following are equivalent:\end{thm}
\begin{enumerate}
\item [(1)] $a\in \mathcal{A}_w^{\tiny{e}}$.
\vspace{-.5mm}
\item [(2)] There exists $x\in \mathcal{A}$ such that
$$x(wa)^2=a, (awxw)^*=xwaw.$$
\vspace{-.5mm}
\item [(3)] There exists $x\in \mathcal{A}$ such that
$$(aw)^2x=a, (awxw)^*=xwaw.$$
\end{enumerate}
\begin{proof} $(1)\Rightarrow (2)$ Let $x=a_w^{\tiny{e}}$. In view of Theorem 2.3, we directly verify that $$x(wa)^2=a, (awxw)^*=(awx)w=(xwa)w=xwaw,$$ as desired.

$(2)\Rightarrow (1)$ By hypothesis, there exists $x\in \mathcal{A}$ such that
$$x(wa)^2=a, (awxw)^*=xwaw.$$ Then $$\begin{array}{rll}
xwaw&=&(awxw)^*=[x(wa)^2wxw]^*[(xwaw)(awxw)]^*\\
&=&(awxw)^*(xwaw)^*=(xwaw)(awxw)\\
&=&x(wa)^2wxw=awxw.
\end{array}$$ Hence $(awxw)^*=xwaw=awxw$. This implies that
$$a=x(wa)^2=(xwaw)a=(awxw)^*a=awxwa.$$

Let $z=xwawx$. Then $$\begin{array}{rll}
zwawz&=&xw(awxwa)wxwawx=xw(awxwa)wx=xwawx=z,\\
z(wa)^2&=&xwaw[x(wa)^2]=x(wa)^2=a,\\
zwaw&=&xw(awxwa)w=xwaw=(awxw)^*,\\
(zwaw)^*&=&awxw=(awxw)^*=zwaw,\\
a(wz)^2&=&awzwz=(awxwa)wxwxwawx=(aw)(xw)xwawx\\
&=&(xw)(aw)xwawx=xw(awxwa)wx=xwawx=z.
\end{array}$$ Therefore $a\in \mathcal{A}_w^{\tiny{e}}$ by Theorem 2.1.

$(1)\Longleftrightarrow (3)$ This is proved in the same manner.\end{proof}

\begin{cor} Let $a,w\in \mathcal{A}$. Then the following are equivalent:\end{cor}
\begin{enumerate}
\item [(1)] $a\in \mathcal{A}_w^{\tiny{e}}$.
\vspace{-.5mm}
\item [(2)] There exists $x\in \mathcal{A}$ such that
$$a(wx)^2=x, awxwa=a, (awxw)^*=xwaw.$$
\vspace{-.5mm}
\item [(3)] There exists $x\in \mathcal{A}$ such that
$$(xw)^2a=x, awxwa=a, (awxw)^*=xwaw.$$
\end{enumerate}
\begin{proof} This is proved by Theorem 2.3 and Theorem 2.6.\end{proof}

\begin{cor} Let $a,w\in \mathcal{A}$. Then the following are equivalent:\end{cor}
\begin{enumerate}
\item [(1)] $a\in \mathcal{A}_w^{\tiny{e}}$.
\vspace{-.5mm}
\item [(2)] There exists $x\in \mathcal{A}$ such that
$$a^2x=a, (ax)^*=xa.$$
\vspace{-.5mm}
\item [(3)] There exists $x\in \mathcal{A}$ such that
$$xa^2=a, (ax)^*=xa.$$
\vspace{-.5mm}
\item [(4)] There exists $x\in \mathcal{A}$ such that
$$ax^2=x, axa=a, (ax)^*=xa.$$
\vspace{-.5mm}
\item [(5)] There exists $x\in \mathcal{A}$ such that
$$x^2a=a, axa=a, (ax)^*=xa.$$
\end{enumerate}
\begin{proof} This is obvious by Theorem 2.6 and Corollary 2.7.\end{proof}

\begin{lem} Let $a\in \mathcal{A}_w^{\tiny{e}}$. Then $xwawx=x$.\end{lem}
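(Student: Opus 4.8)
The plan is to work directly from the four defining equations of a $w$-EP inverse $x=a_w^{\tiny{e}}$, namely $a(wx)^2=x$, $x(wa)^2=a$, $(awxw)^*=awxw$ and $(xwaw)^*=xwaw$, so as to avoid appealing to any uniqueness of the $w$-EP inverse. The device I would use is to name the two Hermitian elements $p:=awxw$ and $q:=xwaw$, which are self-adjoint precisely by the last two equations. Since $xwawx=(xwaw)x=qx$, the target identity rewrites as $qx=x$, so it suffices to understand how $p$ and $q$ interact.

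First I would extract the one-sided consequences. From $a(wx)^2=awxwx=x$ one gets $px=(awxw)x=awxwx=x$, and also $pq=(awxw)(xwaw)=(awxwx)waw=xwaw=q$. Symmetrically, from $x(wa)^2=xwawa=a$ one obtains $qp=(xwaw)(awxw)=(xwawa)wxw=awxw=p$. Thus I will have in hand the two linking relations $pq=q$ and $qp=p$ together with $px=x$.

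The key step, and the only place any genuine idea enters, is to feed self-adjointness back in: taking the adjoint of $pq=q$ and using $p^*=p$, $q^*=q$ yields $qp=(pq)^*=q^*=q$, and comparing this with $qp=p$ forces $p=q$, that is, $awxw=xwaw$. Once this coincidence is established the conclusion is immediate, since $xwawx=qx=px=x$. The main obstacle is recognizing that the equations $a(wx)^2=x$ and $x(wa)^2=a$ produce the cross relations $pq=q$ and $qp=p$, and that Hermitianness is exactly what collapses these into $p=q$; the remaining manipulations are routine. (Alternatively one could invoke Theorem 2.1 to identify $x$ with the $w$-group inverse $a_w^{\#}$, whose defining equations already include $xwawx=x$, but this route tacitly assumes the witness $x$ coincides with $a_w^{\#}$.)
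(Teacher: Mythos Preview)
Your argument is correct, but it takes a genuinely different route from the paper. The paper's proof is a two-line direct substitution using only the first two defining equations: replace the rightmost $x$ in $xwawx$ by $a(wx)^2$, regroup as $x(wa)^2(wx)^2$, then use $x(wa)^2=a$ and finally $a(wx)^2=x$. In particular the Hermitian conditions $(awxw)^*=awxw$ and $(xwaw)^*=xwaw$ are never invoked. Your approach, by contrast, uses all four axioms and passes through the stronger intermediate fact $awxw=xwaw$ (your $p=q$), which is a useful byproduct---it is exactly the commutation relation appearing later in the paper via the $w$-group inverse identification. So the paper's route is more economical for the lemma as stated, while yours is self-contained (no appeal to Theorem~2.1) and delivers extra information; both are valid.
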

\begin{proof}  By hypothesis, we can find $x\in \mathcal{A}$ such that
$$a(wx)^2=x, x(wa)^2=a, (awxw)^*=awxw, (xwaw)^*=xwaw.$$
Then $$\begin{array}{rll}
xwawx&=&xwawa(wx)^2=x(wa)^2(wx)^2\\
&=&a(wx)^2=x.
\end{array}$$\end{proof}

We use $\ell(x)$ and $r(x)$ to denote the left and right annihilator of $x$ in $\mathcal{A}$, respectively. We derive

\begin{thm} Let $a,w\in \mathcal{A}$. Then the following are equivalent:\end{thm}
\begin{enumerate}
\item [(1)] $a\in \mathcal{A}_w^{\tiny{e}}$.
\vspace{-.5mm}
\item [(2)] There exists $x\in \mathcal{A}$ such that
$$awxwa=a, x\mathcal{A}=aw\mathcal{A}, \mathcal{A}x^*=\mathcal{A}aw.$$
\item [(3)] There exists $x\in \mathcal{A}$ such that
$$xwawx=x, x\mathcal{A}=aw\mathcal{A}, \mathcal{A}x^*=\mathcal{A}aw.$$
\item [(4)] There exists $x\in \mathcal{A}$ such that
$$awxwa=a, \ell(x)=\ell(aw), r(x^*)=r(aw).$$
\item [(5)] There exists $x\in \mathcal{A}$ such that
$$xwawx=x, \ell(x)=\ell(aw), r(x^*)=r(aw).$$
\end{enumerate}
\begin{proof}  $(1)\Rightarrow (2)$ Let $x=a_w^{\tiny{e}}$. Then $awxwa=a$.
Moreover, we have $$a(wx)^2=x, x(wa)^2=a, (xwaw)^*=xwaw.$$ Hence, $x=aw(xwx)\in \mathcal{A}$ and $aw=x(wa)^2w\in \mathcal{A}$. This implies that
$x\mathcal{A}=aw\mathcal{A}$. In view of Lemma 2.9, $x=xwawx$, and then $x^*=x^*(xwaw)^*=x^*xwaw\in \mathcal{A}aw$.
Since $aw=awxwaw$, we have $aw=aw(xwaw)=aw(xwaw)^*=aw(waw)^*x^*\in \mathcal{A}x^*$. Therefore
$\mathcal{A}x^*=\mathcal{A}aw$.

$(2)\Rightarrow (4)$ If $rx=0$ for $r\in \mathcal{A}$, then $raw=0$. If $raw=0$ for $r\in \mathcal{A}$, then $rx=0$.
Hence $\ell(x)=\ell(aw)$. Likewise, we have $r(x^*)=r(aw),$ as required.

$(4)\Rightarrow (1)$ Since $awxwa=a$, we have $1-awxw\in \ell(aw)$, and then $1-awxw\in \ell(x)$.
This implies that $x=a(wx)^2$. Hence, $x^*=x^*(awxw)^*$. This implies that $1-(awxw)^*\in r(x^*)$, and then
$1-(awxw)^*\in r(aw)$. We infer that $aw=(awxw)^*aw$; whence, $awxw=(awxw)^*awxw$. Therefore
$(awxw)^*=(awxw)^*awxw=awxw$. As $1-xwaw\in r(aw)$, we have $1-xwaw\in r(x^*)$. it follows that
$x^*=x^*xwaw$. Then $(waw)^*x^*=(waw)^*x^*xwaw$, i.e., $(xwaw)^*=(xwaw)^*xwaw$. Thus $(xwaw)^*=xwaw$.

Since $a=awxwa$, we have $aw=awxwaw$, and so $1-xwaw\in r(aw)$.
Then $1-xwaw\in r(x^*)$. This shows that $x^*=x^*(xwaw)$; whence, $x=(xwaw)^*x=xwawx$.
Thus $1-xwaw\in \ell(x)=\ell(aw)$. This implies that $aw=xw(aw)^2$, and then
$$a=(aw)xwa=xw(aw)^2xwa=xwaw(awxwa)=xwawa=x(wa)^2.$$ Therefore
$a\in \mathcal{A}_w^{\tiny{e}}$.

$(1)\Rightarrow (3)$ By virtue of Lemma 2.9, $x=xwawx$. As in the argument above, we prove that $x\mathcal{A}=aw\mathcal{A}, \mathcal{A}x^*=\mathcal{A}aw,$ as required.

$(3)\Rightarrow (5)$ This is obvious.

$(5)\Rightarrow (1)$ This is similar to the discussion in $(3)\Rightarrow (1)$.\end{proof}

\begin{cor} Let $a\in \mathcal{A},w\in \mathcal{A}^{-1}$. Then the following are equivalent:\end{cor}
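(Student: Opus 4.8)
The plan is to deduce this corollary from the results already established for a general weight by turning the invertibility of $w$ into a change of variables that removes the weight entirely. Since $w\in\mathcal{A}^{-1}$, both left and right multiplication by $w$ are bijections of $\mathcal{A}$, so I may pass freely between an element $x$ and $xw$ and between $a$ and $aw$. I expect the statement to assert that $a\in\mathcal{A}_w^{\tiny{e}}$ is equivalent to $aw\in\mathcal{A}^{\tiny{EP}}$ (together with the reformulations coming from Theorem 2.10), and the whole proof is the verification that the weighted relations for the pair $(a,x)$ are exactly the unweighted relations for the pair $(aw,xw)$.

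Concretely, I would set $b=aw$ and $c=xw$, where $x$ is the candidate $w$-inverse of $a$. First I would check that the $w$-group-inverse identities $awxwa=a$, $xwawx=x$, $awx=xwa$ become, after multiplying on the right by $w$, the ordinary relations $bcb=b$, $cbc=c$, $bc=cb$; hence $a\in\mathcal{A}_w^{\#}$ if and only if $aw\in\mathcal{A}^{\#}$, with $(aw)^{\#}=a_w^{\#}w$. Next I would observe that $awa_w^{\#}w=aw\cdot xw=bc=b\,b^{\#}$, so the self-adjointness condition $(awa_w^{\#}w)^{*}=awa_w^{\#}w$ of Theorem 2.1(3) is exactly $(aw(aw)^{\#})^{*}=aw(aw)^{\#}$. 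Applying the unweighted characterization of EP elements (Corollary 2.2, i.e.\ Theorem 2.1 with weight $1$) to $b=aw$ then gives $aw\in\mathcal{A}^{\tiny{EP}}$, and each implication is reversible because $w$ is invertible.

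The point that needs care — and the reason the weight appears symmetrically as $awxw$ rather than $awx$ in the definition — is exactly this matching of the involutive conditions: only with the extra factor of $w$ on the right does $awxw$ coincide with $b\,b^{\#}$ for $b=aw$, so that the $*$-symmetric idempotent of the weighted theory lines up with the $*$-symmetric idempotent $aw(aw)^{\#}$ of ordinary EP theory; note also that $awxw=bc=cb=xwaw$, so the two self-adjointness hypotheses of Definition are here a single condition. I would flag the asymmetry that the same computation performed with left multiplication by $w$ produces $(wa,wx)$ as a group-inverse pair but does \emph{not} deliver self-adjointness of $wa(wa)^{\#}$, since $(wawx)^{*}=wawx$ is not among the hypotheses; thus the clean conclusion is about $aw$, not $wa$, and I would not attempt to smuggle in a $wa$-EP statement. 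As a consistency check I would re-derive the same equivalence from the annihilator form of Theorem 2.10, using that $w\in\mathcal{A}^{-1}$ gives $aw\mathcal{A}=a\mathcal{A}$ and $\ell(aw)=\ell(a)$, which confirms the reduction independently.
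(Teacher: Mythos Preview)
Your proposal proves a different statement from the one in the paper. The corollary actually asserts the equivalence of
\begin{enumerate}
\item[(1)] $a\in\mathcal{A}_w^{\tiny e}$;
\item[(2)] $a\in\mathcal{A}_w^{\tiny\textcircled{\#}}$ and there exists $u\in\mathcal{A}^{-1}$ with $a_w^{\tiny\textcircled{\#}}=awu$,
\end{enumerate}
where $a_w^{\tiny\textcircled{\#}}$ denotes a weighted core inverse (an $x$ with $awxwa=a$, $xwawx=x$, $(awxw)^*=awxw$, $awx=xwa$). It is \emph{not} the reduction ``$a\in\mathcal{A}_w^{\tiny e}\Longleftrightarrow aw\in\mathcal{A}^{\mathrm{EP}}$'' that you guessed. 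Your substitution $b=aw$, $c=xw$ therefore addresses the wrong target: nothing in your argument produces an invertible $u$ with $a_w^{\tiny\textcircled{\#}}=awu$, nor does it use such a $u$ to deduce the $w$-EP conditions.

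The paper's proof uses the invertibility of $w$ in a quite different way. For $(1)\Rightarrow(2)$ it \emph{constructs} the unit explicitly: with $x=a_w^{\tiny e}$ it sets $u=[(xw)^2+1-xwaw]\,w^{-1}$, checks that $u^{-1}=w[(aw)^2+1-xwaw]$, and verifies $awu=a(wx)^2=x$. For $(2)\Rightarrow(1)$ it starts from $x=a_w^{\tiny\textcircled{\#}}=awu$, derives $x\mathcal{A}=aw\mathcal{A}$ directly from the core relations, and obtains $\mathcal{A}x^*=\mathcal{A}aw$ by writing $x^*=(awu)^*=u^*(aw)^*$ and pushing $(aw)^*$ back to $aw$ via $(awxw)^*=awxw$; Theorem~2.10 then closes the argument. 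The invertibility of $w$ enters only through the presence of $w^{-1}$ in the formula for $u$, not through a wholesale change of variables. Your observation that $a\in\mathcal{A}_w^{\tiny e}\Longleftrightarrow aw\in\mathcal{A}^{\mathrm{EP}}$ when $w$ is invertible is correct and pleasant, but it is orthogonal to what is being claimed here.
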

\begin{enumerate}
\item [(1)] $a\in \mathcal{A}_w^{\tiny{e}}$.
\vspace{-.5mm}
\item [(2)] $a\in \mathcal{A}_w^{\tiny\textcircled{\#}}$ and there exists $u\in \mathcal{A}^{-1}$ such that
$a_w^{\tiny\textcircled{\#}}=awu$
\end{enumerate}
\begin{proof}  $(1)\Rightarrow (2)$ Set $u=[(xw)^2+1-xwaw]w^{-1}$. Then $u^{-1}=w[(aw)^2+1-xwaw]$.
Moreover, we check that $awu=aw[(xw)^2+1-xwaw]w^{-1}=a(wx)^2=x$, as required.

$(2)\Rightarrow (1)$ Set $x=a_w^{\tiny\textcircled{\#}}$. Then $awxwa=a, x=xwawx, (awxw)^*=awxw$ and $awx=xwa$. Hence, $x=a(wx)^2$ and $aw=x(wa)^2w$.
This implies that $x\mathcal{A}=aw\mathcal{A}$. Clearly, $aw=awxwaw=aw(xwaw)^*$ $=aw(waw)^*x^*$. Moreover, we have
$x^*=(awu)^*=u^*(awxwaw)^*=u^*(aw)^*(awxw)^*=u^*(aw)^*(awxw)=[u^*(aw)^*(xw)](aw)$.
Hence, $\mathcal{A}x^*=\mathcal{A}aw$. Therefore $a\in \mathcal{A}_w^{\tiny{e}}$ by Theorem 2.10.\end{proof}

\section{generalized weighted EP elements}

The aim of this section is to introduce generalized weighted EP elements in a Banach algebra. Many elementary properties of this new generalized inverse are presented.

\begin{thm} Let $a\in \mathcal{A}$. Then the following are equivalent:\end{thm}
\begin{enumerate}
\item [(1)] $a\in \mathcal{A}_w^{\tiny\textcircled{e}}$.
\vspace{-.5mm}
\item [(2)] There exist $x,y\in \mathcal{A}$ such that $$a=x+y, x^*y=ywx=0, x\in \mathcal{A}_w^{\tiny{e}},~y\in \mathcal{A}_w^{qnil}.$$
\end{enumerate}
\begin{proof} $(1)\Rightarrow (2)$ By hypothesis, we can find $x\in \mathcal{A}$ such that
$$\begin{array}{c}
a(wx)^2=x, (awxw)^*=awxw, (xwaw)^*=xwaw,\\
\lim\limits_{n\to \infty}||(aw)^n-xw(aw)^{n+1}||^{\frac{1}{n}}=0.
\end{array}$$ Let $z=awxwa$ and $y=a-awxwa$. Then we check that
$$\begin{array}{rll}
z^*y&=&(awxwa)^*(a-awxwa)=a^*(awxw)^*(a-awxwa)\\
&=&a^*awxw(a-awxwa)=a^*[awxwa-aw(xwawx)wa]\\
&=&a^*[awxwa-awxwa]=0,\\
ywz&=&(a-awxwa)wawxwa=(aw)^2xwa-aw[xw(aw)^2xw]a\\
&=&(aw)^2xwa-aw[(aw)xw]a=0,\\
yw&=&(a-awxwa)w=aw-awxwaw,\\
\end{array}$$
$$\begin{array}{rl}
&\big(aw-xw(aw)^2\big)^2\\
=&[aw-xw(aw)^2][aw-xw(aw)^2]=[aw-xw(aw)^2]aw,\\
&\big(aw-xw(aw)^2\big)^k\\
=&[aw-xw(aw)^2](aw)^{k-1}=(aw)^k-(xw)(aw)^{k+1},\\
yw&\in\mathcal{A}_w^{qnil}.
\end{array}$$
We claim that $z_w^{\tiny{e}}=x.$ It is easy to verify that

$$\begin{array}{rll}
z(wx)^2&=&aw(xwawx)wx=a(wx)^2=x,\\
x(wz)^2&=&(xwawx)wawawxwa=xw(aw)^2xwa=awxwa=z,\\
zwxw&=&aw(xwawx)w=awxw,\\
(zwxw)^*&=&zwxw,\\
xwzw&=&(xwawxw)aw=xwaw,\\
(xwzw)^*&=&xwzw.
\end{array}$$

$(2)\Rightarrow (1)$ By hypothesis, there exist $z,y\in \mathcal{A}$ such that $a=z+y, z^*y=ywz=0, z\in \mathcal{A}_w^{\tiny{e}},
y\in \mathcal{A}^{qnil}.$ Set $x=z_w^{\tiny{e}}$. In view of Lemma 2.9,
$xwzwx=x.$ Moreover, we check that
$$\begin{array}{rll}
a(wx)^2&=&(z+y)[wz_w^{\tiny{e}}]^2=z[wz_w^{\tiny{e}}]^2=z_w^{\tiny{e}}=x,\\
awxw&=&(y+z)wz_w^{\tiny{e}}w=zwz_w^{\tiny{e}}w,\\
(awxw)^*&=&awxw,\\
xwaw&=&xwzwxwaw=xw[zwz_w^{\tiny{e}}w]^*(z+y)w\\
&=&xw[wz_w^{\tiny{e}}w]^*z^*(z+y)w=xw[wz_w^{\tiny{e}}w]^*z^*zw\\
&=&xw[zwz_w^{\tiny{e}}w]^*zw=xw[zwxw]zw=xwzw,\\
(xwaw)^*&=&xwaw.
\end{array}$$
As in the preceding discussion, we get $(1-xwaw)yw=yw.$ Hence, we prove that
$$\begin{array}{rll}
(1-xwaw)aw&=&[1-xwaw](z+y)w=[1-xwaw]zw+yw\\
&=&[1-xw(z+y)w]zw+yw\\
&=&[1-xwzw]zw+yw=yw.
\end{array}$$

Obviously, we have $$\begin{array}{rll}
[aw-xw(aw)^2]xw(aw)^2&=&[awxw-xwaw(awxw)](aw)^2\\
&=&[awxw-zwxw](aw)^2=0.
\end{array}$$
Hence we derive that $$\lim\limits_{n\to \infty}||(aw)^n-xw(aw)^{n+1}||^{\frac{1}{n}}=0.$$
Therefore $a\in \mathcal{A}_w^{\tiny\textcircled{e}}$.
\end{proof}

\begin{cor} Let $a\in \mathcal{A}$. Then the following are equivalent:\end{cor}
\begin{enumerate}
\item [(1)] $a\in \mathcal{A}^{\tiny\textcircled{e}}$.
\vspace{-.5mm}
\item [(2)] There exists $x\in \mathcal{A}$ such that $$\begin{array}{c}
ax^2=x, (ax)^*=ax, (xa)^*=xa, \lim\limits_{n\to \infty}||a^n-xa^{n+1}||^{\frac{1}{n}}=0.
\end{array}$$
\end{enumerate}
\begin{proof} This is obvious by Theorem 3.1.\end{proof}

We are ready to prove:

\begin{thm} Let $a\in \mathcal{A}$. Then the following are equivalent:\end{thm}
\begin{enumerate}
\item [(1)] $a\in \mathcal{A}_w^{\tiny\textcircled{e}}$.
\vspace{-.5mm}
\item [(2)] There exists $x\in \mathcal{A}$ such that
$$a(wx)^2=x, (awxw)^*=xwaw, aw-xw(aw)^2\in \mathcal{A}^{qnil}.$$
\end{enumerate}
\begin{proof} $(1)\Rightarrow (2)$ By virtue of Theorem 3.1, there exist $z,y\in \mathcal{A}$ such that $a=z+y, z^*y=ywz=0, z\in \mathcal{A}_w^{\tiny{e}},
y\in \mathcal{A}^{qnil}.$ Set $x=z_w^{\tiny{e}}$. In view of Theorem 2.1, we can choose $x=z_w^{\#}$. Then
$$\begin{array}{rll}
a(wx)^2&=&(z+y)(wx)^2=z(wz_w^{\#})^2=x,\\
xwaw&=&xw(z+y)w=(xw)(zwxw)(zw+yw)\\
&=&(xw)(zwxw)^*(zw+yw)=(xw)(zwxw)^*(zw)\\
&=&xwzwxwzw=zwxw=(z+y)wxw=awxw,\\
(awxw)^*&=&(zwxw)^*=zwxw=xwaw,\\
aw-xw(aw)^2&=&(1-xwaw)aw=(1-zwxw)aw\\
&=&aw-zwxw(z+y)w=aw-zwxwzw\\
&=&(a-z)w=yw\in \mathcal{A}^{qnil},
\end{array}$$ as desired.

$(2)\Rightarrow (1)$ By hypotheses, we have $z\in \mathcal{A}$ such that $$a(wz)^2=z, (awzw)^*=zwaw, aw-zw(aw)^2\in \mathcal{A}^{qnil}.$$
Then $a(wz)^2=z, (zwaw)^*=awzw$. As in the proof of Theorem 2.6,
we see that $zwaw=awzw$. Set $x=awzwa$ and $y=a-awzwa.$
We claim that $x$ is a $w$-EP element.
Evidently, we verify that
$$\begin{array}{rll}
z(wx)^2&=&zwawzw(aw)^2zwa=(aw)^3(zw)^3a=awzwa=x,\\
xwzw&=&awzwawzw=(aw)^2(zw)^2=awzw\\
&=&zwaw=aw(zw)^2aw=zwawzwaw=zwxw,\\
(xwzw)^*&=&(awzw)^*=zwaw=awzw\\
&=&(aw)^2(zw)^2=(zwaw)zwaw=zwxw.
\end{array}$$ In light of Theorem 2.6, $x\in \mathcal{A}_w^{\tiny{e}}$.

By hypothesis, $(1-zwaw)aw\in \mathcal{A}^{qnil}$. By using Cline's formula (see~\cite[Theorem 2.1]{L}),
$aw(1-zwaw)\in \mathcal{A}^{qnil}$; hence,
$y=a-awzwa\in \mathcal{A}_w^{qnil}$. Furthermore, we verify that $$\begin{array}{rll}
x^*y&=&(awzwa)^*(1-awzw)a=a^*(awzw)^*(1-awzw)a\\
&=&a^*(awzw)(1-awzw)a=0,\\
ywx&=&(a-awzwa)wawzwa=awawzwa-awzw(awaw)zwa\\
&=&awawzwa-[a(wz)^2]w(aw)^2a\\
&=&(aw)^2zwa-zw(aw)^2a=0\\
\end{array}$$ This proof is completed.\end{proof}

\begin{cor} Let $a\in \mathcal{A}$. Then the following are equivalent:\end{cor}
\begin{enumerate}
\item [(1)] $a\in \mathcal{A}^{\tiny\textcircled{e}}$.
\vspace{-.5mm}
\item [(2)] There exists $x\in \mathcal{A}$ such that
$$ax^2=x, (ax)^*=xa, a-xa^2\in \mathcal{A}^{qnil}.$$
\end{enumerate}
\begin{proof} Straightforward by Theorem 3.3.\end{proof}

We come now to establish the polar-like property of generalized $w$-EP elements.

\begin{thm} Let $a\in \mathcal{A}$. Then the following are equivalent:\end{thm}
\begin{enumerate}
\item [(1)]{\it $a\in \mathcal{A}_w^{\tiny\textcircled{e}}$.}
\item [(2)]{\it There exists a projection $p\in \mathcal{A}$ such that
$$aw+p\in \mathcal{A}^{-1}, 1-p\in \mathcal{A}w, awp=paw\in \mathcal{A}^{qnil}.$$}
\end{enumerate}
\begin{proof} $(1)\Rightarrow (2)$ As in the proof of Theorem 3.1, there exists $x\in \mathcal{A}$ such that
$$a(wx)^2=x, (awxw)^*=xwaw, xwaw=awxw, aw-xw(aw)^2\in \mathcal{A}^{qnil}.$$
Let $p=1-awxw$. Then $p=p^2=p^*\in \mathcal{A}, 1-p\in \mathcal{A}w$ and $aw+p=aw+1-awxw$. It is easy to verify that
$$\begin{array}{rl}
&(aw+1-awxw)(xw+1-awxw)\\
=&(xw+1-awxw)(aw+1-awxw),\\
&(aw+1-awxw)(xw+1-awxw)\\
=&awxw+aw(1-awxw)+(1-awxw)xw+(1-awxw)\\
=&1+aw(1-awxw)\in \mathcal{A}^{-1}.
\end{array}$$ Therefore $aw+p\in \mathcal{A}^{-1}$. Morover, we have $awp=paw=aw-(aw)^2xw\in \mathcal{A}^{qnil}$, as
required.

$(2)\Rightarrow (1)$ By hypothesis, There exists a projection $p\in \mathcal{A}$ such that
$$aw+p\in \mathcal{A}^{-1}, 1-p\in \mathcal{A}w, awp=paw\in \mathcal{A}^{qnil}.$$ Set
$x=(1-p)(aw+p)^{-1}$. Then $x\in \mathcal{A}w$. Write $x=zw$ with $(1-p)z=z$. Then
 $$\begin{array}{rll}
 a(wz)^2&=&awzwz=awxz=aw(1-p)(aw+p)^{-1}z\\
 &=&(1-p)(aw+p)(aw+p)^{-1}z=(1-p)z=z,\\
 awzw&=&awx=aw(1-p)(aw+p)^{-1}\\
 &=&(1-p)(aw+p)(aw+p)^{-1}=1-p,\\
 zwaw&=&xaw=(1-p)(aw+p)^{-1}aw=(aw+p)^{-1}aw(1-p)\\
 &=&(aw+p)^{-1}(aw+p)(1-p)=1-p,\\
 (awzw)^*&=&zwaw.
 \end{array}$$ Moreover,
we verify that $$\begin{array}{rll}
aw-zw(aw)^2&=&aw-(1-p)(aw+p)^{-1}(aw)^2\\
&=&aw-(aw+p)^{-1}(aw+p)aw(1-p)\\
&=&aw-aw(1-p)=awp\in
\mathcal{A}^{qnil}.
\end{array}$$ According to Theorem 3.3, $a\in \mathcal{A}_w^{\tiny\textcircled{e}}$.\end{proof}

\begin{cor} Let $a\in \mathcal{A}$. Then the following are equivalent:\end{cor}
\begin{enumerate}
\item [(1)] $a\in \mathcal{A}_w^{\tiny\textcircled{e}}$.
\vspace{-.5mm}
\item [(2)] There exists $x\in \mathcal{A}$ such that $$(awxw)^*=awxw=xwaw, aw-xw(aw)^2\in \mathcal{A}^{qnil}.$$
\end{enumerate}
\begin{proof} $(1)\Rightarrow (2)$ This is proved as in Theorem 3.3.

$(2)\Rightarrow (1)$ By hypothesis, there exists some $x\in \mathcal{A}$ such that $$(awxw)^*=awxw=xwaw, aw-xw(aw)^2\in \mathcal{A}^{qnil}.$$
Set $s=xwawx$.
Then we check that $$\begin{array}{rll}
aw-(aw)^2sw&=&aw-(aw)^3(xw)^2\\
&=&(1+awxw)[aw-(aw)^2xw]\\
&\in& \mathcal{A}^{qnil},\\
sw-(sw)^2aw&=&xwawxw-(xwawxw)^2aw\\
&=&awxwxw-(aw)^3(xw)^4\\
&=&(1+awxw)(xw)^2[aw-(aw)^2xw]\\
&\in&\mathcal{A}^{qnil}.
\end{array}$$
Since $(aw)(xw)=(xw)(aw)$, we verify that $(aw)(sw)=(sw)(aw)$.
In view of ~\cite[Lemma 2.4]{CK}, we have $$awsw-(awsw)^2=[aw-(aw)^2sw]sw\in \mathcal{A}^{qnil}.$$ Let $$\begin{array}{c}
q=awsw-(awsw)^2, z=-\frac{1}{2}\sum\limits_{k=1}^{\infty}
\left(
\begin{array}{c}
\frac{1}{2}\\
k
\end{array}
\right)\big(4q(4q-1)^{-1}\big)^k,\\
e=awsw-(2awsw-1)z.
\end{array}$$ As in the proof of
~\cite[Lemma 15.1.2]{C1}, we have $e^2=e$ and $awzw-e\in \mathcal{A}^{qnil}$.
We easily check that
$$\begin{array}{rl}
&(aw+1-awsw)(sw+1-awsw)\\
=&1+[aw-(aw)^2sw](1-sw)+[sw-(sw)^2aw]\in \mathcal{A}^{-1}.
\end{array}$$
Hence, $$\begin{array}{lll}
aw+1-e&=&(aw+1-awsw)+(awsw-e)\in \mathcal{A}^{-1},\\
aw(1-e)&=&[aw-(aw)^2sw]+aw(awsw-e)\in\mathcal{A}^{qnil}.
\end{array}$$
Since $(awxw)^*=awxw$, we see that $(awsw)^*=awsw$. This implies that $q^*=q$, and then $z^*=z$.
Therefore $e^*=e$. Moreover, we see that $e\in \mathcal{A}w$. By virtue of
Theorem 3.5, $a\in \mathcal{A}_w^{\tiny\textcircled{e}}$.\end{proof}

\section{Representations of generalized weighted EP inverses}

The aim of this section is to establish representations of generalized weighted EP inverses by using other weighted generalized inverses. We come now to the demonstration for which this section has been developed.

\begin{thm} Let $a\in \mathcal{A}$. Then the following are equivalent:\end{thm}
\begin{enumerate}
\item [(1)] $a\in \mathcal{A}_w^{\tiny\textcircled{e}}$.
\vspace{-.5mm}
\item [(2)] $a\in \mathcal{A}^{d,w}$ and $a^{d,w}\in \mathcal{A}_w^{\tiny{e}}$.
\end{enumerate}
In this case, $$[a^{d,w}]_w^{\tiny{e}}=(aw)^2a_w^{\tiny\textcircled{e}}, a_w^{\tiny\textcircled{e}}=[a^{d,w}w]^2[a^{d,w}]_w^{\tiny{e}}.$$
\begin{proof} $(1)\Rightarrow (2)$ In view of Theorem 3.1, there exist $x,y\in \mathcal{A}$ such that $$a=x+y, x^*y=ywx=0, x\in \mathcal{A}_w^{\tiny{e}},~y\in \mathcal{A}_w^{qnil}.$$ Since $xw,yw\in \mathcal{A}^d$ and $(yw)(xw)=0$, it follows by ~\cite[Lemma 15.2.2]{C1} that
$aw\in \mathcal{A}^d$. Hence $a\in \mathcal{A}^{d,w}$ and $a^{d,w}=[(aw)^d]^2a$.
We verify that $$\begin{array}{rll}
[(aw)^2a_w^{\tiny\textcircled{e}}w][((aw)^d)^2aw]&=&(aw)^2a_w^{\tiny\textcircled{e}}w(aw)^2[(aw)^d]^3\\
&=&(aw)^3[(aw)^d]^3=(aw)^daw\\
&=&awa_w^{\#}=awa_w^{\tiny\textcircled{e}}w,\\
((aw)^d)^2a[w(aw)^2a_w^{\tiny\textcircled{e}}]^2&=&(aw)^d(aw)^3a_w^{\tiny\textcircled{e}}]\\
&=&(aw)^2a_w^{\tiny\textcircled{e}},\\
(aw)^2a_w^{\tiny\textcircled{e}}[w((aw)^d)^2a]^2&=&(aw)^2a_w^{\tiny\textcircled{e}}w((aw)^d)^3a\\
&=&(aw)^2a_w^{\tiny\textcircled{e}}w(aw)^2[(aw)^d]^5a\\
&=&(aw)^3[(aw)^d]^5a=((aw)^d)^2a.
\end{array}$$
Thus $((aw)^d)^2a\in \mathcal{A}_w^{\tiny{e}}$ and
$$[a^{d,w}]_w^{\tiny{e}}=(aw)^2a_w^{\tiny\textcircled{e}}.$$

$(2)\Rightarrow (1)$ Set $x=[a^{d,w}w]^2[a^{d,w}]_w^{\tiny{e}}$. Then we check that
$$\begin{array}{rll}
(xw)\big(a^{d,w}w[a^{d,w}]_w^{\tiny{e}}waw\big)&=&[a^{d,w}w]^2[a^{d,w}]_w^{\tiny{e}}wa^{d,w}w[a^{d,w}]_w^{\tiny{e}}waw\\
&=&[a^{d,w}w]^2[a^{d,w}]_w^{\tiny{e}}waw\\
&=&[a^{d,w}]_w^{\tiny{e}}w[a^{d,w}w]^2aw\\
&=&[a^{d,w}]_w^{\tiny{e}}wa^{d,w}w.
\end{array}$$ Hence, $$[(xw)\big(a^{d,w}w[a^{d,w}]_w^{\tiny{e}}waw\big)]^*=(xw)\big(a^{d,w}w[a^{d,w}]_w^{\tiny{e}}waw\big).$$
Moreover, we see that
$$\begin{array}{rll}
[a^{d,w}w[a^{d,w}]_w^{\tiny{e}}wa](wx)^2&=&[a^{d,w}w[a^{d,w}]_w^{\tiny{e}}wa]w[a^{d,w}w]^2[a^{d,w}]_w^{\tiny{e}}wx\\
&=&a^{d,w}w[a^{d,w}]_w^{\tiny{e}}wa^{d,w}w[a^{d,w}]_w^{\tiny{e}}wx\\
&=&[a^{d,w}w]^2[([a^{d,w}]_w^{\tiny{e}}w)^2a^{d,w}]w[a^{d,w}w][a^{d,w}]_w^{\tiny{e}}\\
&=&[a^{d,w}w]^2[a^{d,w}]_w^{\tiny{e}}wa^{d,w}w[a^{d,w}]_w^{\tiny{e}}\\
&=&[a^{d,w}w]^2[a^{d,w}]_w^{\tiny{e}}=x,\\
x[wa^{d,w}w[a^{d,w}]_w^{\tiny{e}}wa]^2&=&[a^{d,w}w]^2[a^{d,w}]_w^{\tiny{e}}wawa^{d,w}w[a^{d,w}]_w^{\tiny{e}}wa\\
&=&[a^{d,w}w][a^{d,w}]_w^{\tiny{e}}wa^{d,w}wawa^{d,w}w[a^{d,w}]_w^{\tiny{e}}wa\\
&=&[a^{d,w}w][a^{d,w}]_w^{\tiny{e}}wa^{d,w}w[a^{d,w}]_w^{\tiny{e}}wa\\
&=&[a^{d,w}w][a^{d,w}]_w^{\tiny{e}}wa.
\end{array}$$ Then $$\begin{array}{c}
a^{d,w}w[a^{d,w}]_w^{\tiny{e}}wa\in \mathcal{A}_w^{\tiny{e}},\\
\big(a^{d,w}w(a^{d,w})_w^{\tiny{e}}wa\big)_w^{\tiny{e}}=[a^{d,w}w]^2\big(a^{d,w}\big)_w^{\tiny{e}}.
\end{array}$$ Write $a=a_1+a_2$, where $a_1=a^{d,w}w[a^{d,w}]_w^{\tiny{e}}wa$ and $a_2=a-a^{d,w}w[a^{d,w}]_w^{\tiny{e}}wa$. It is easy to verify that
$$\begin{array}{rl}
&a_2wa_1\\
=&[a-a^{d,w}w[a^{d,w}]_w^{\tiny{e}}wa]wa^{d,w}w[a^{d,w}]_w^{\tiny{e}}wa\\
=&[awa^{d,w}w[a^{d,w}]_w^{\tiny{e}}-a^{d,w}[((aw)^da)^2]_w^{\tiny{e}}w[(aw)^d]^2a]waw[a^{d,w}]_w^{\tiny{e}}]wa\\
=&[awa^{d,w}w[a^{d,w}]_w^{\tiny{e}}-a^{d,w}waw[a^{d,w}]_w^{\tiny{e}}]wa\\
=&0,\\
&a_1^*a_2\\
=&[a^{d,w}w[a^{d,w}]_w^{\tiny{e}}wa]^*[a-a^{d,w}w[a^{d,w}]_w^{\tiny{e}}wa]\\
=&a^*[a^{d,w}w[a^{d,w}]_w^{\tiny{e}}w][1-a^{d,w}w[a^{d,w}]_w^{\tiny{e}}w]a\\
=&0.
\end{array}$$
Moreover, we check that
$$\begin{array}{rll}
aw-a^{d,w}w[a^{d,w}]_w^{\tiny{e}}waw&=&aw-[a^{d,w}]_w^{\tiny{e}}wa^{d,w}waw\\
&=&aw-[a^{d,w}]_w^{\tiny{e}}w(aw)^2[(aw)^d]^2\\
&=&aw-aw[(aw)^d]^2\\
&\in& \mathcal{A}^{qnil}.
\end{array}$$
Thus, $a_2=a-a^{d,w}w[a^{d,w}]_w^{\tiny{e}}wa\in \mathcal{A}_w^{qnil}$. Therefore $a=a_1+a_2$ is the generalized $w$-core decomposition of $a$. Then
$a_w^{\tiny\textcircled{d}}=(a_1)_w^{\tiny{e}}=[a^{d,w}]^2[a^{d,w}]_w^{\tiny{e}}.$\end{proof}

\begin{cor} Let $a\in \mathcal{A}$. Then the following are equivalent:\end{cor}
\begin{enumerate}
\item [(1)] $a\in \mathcal{A}^{\tiny\textcircled{e}}$.
\vspace{-.5mm}
\item [(2)] $a\in \mathcal{A}^{d}$ and $a^{d}\in \mathcal{A}^{\tiny{EP}}$.
\end{enumerate}
In this case, $$a^{\tiny\textcircled{e}}=(a^{d})^2(a^{d})^{\tiny{d}}.$$
\begin{proof} This is obvious by Theorem 4.1.\end{proof}

\begin{thm} Let $a,w\in \mathcal{A}$. Then the following are equivalent:\end{thm}
\begin{enumerate}
\item [(1)] $a\in \mathcal{A}_w^{\tiny{e}}$.
\vspace{-.5mm}
\item [(2)] $a\in \mathcal{A}^{d,w}$ and there exists $x\in \mathcal{A}$ such that
$$xw(aw)^dx=x, x\mathcal{A}=(aw)^d\mathcal{A}, \mathcal{A}x^*=\mathcal{A}(aw)^d.$$
\item [(3)] $a\in \mathcal{A}^{d,w}$ and there exists $x\in \mathcal{A}$ such that
$$xw(aw)^dx=x, \ell(x)=\ell(aw)^d, r(x^*)=r(aw)^d.$$
\end{enumerate}
\begin{proof} $(1)\Rightarrow (2)$ Let $x=a_w^{\tiny\textcircled{e}}wawa$. Then we check that
$$\begin{array}{rll}
xw(aw)^dx&=&a_w^{\tiny\textcircled{e}}wawaw(aw)^da_w^{\tiny\textcircled{e}}wawa\\
&=&aw(aw)^da_w^{\tiny\textcircled{e}}wawa_w^{\tiny\textcircled{e}}wawa\\
&=&aw(aw)^da_w^{\tiny\textcircled{e}}wawa\\
&=&x.
\end{array}$$  Hence $x=xw(aw)^dx=a_w^{\tiny\textcircled{e}}w(aw)^2(aw)^dx=(aw)^da_w^{\tiny\textcircled{e}}w(aw)^2x.$

On the other hand, $$(aw)^d=(aw)^n[(aw)^d]^{n+1}=[(aw)^n-a_w^{\tiny\textcircled{e}}waw(aw)^n][(aw)^d]^{n+1}+xw(aw)^d.$$
This implies that $(aw)^d=xw(aw)(aw)^d$. Thus, we deduce that
$x\mathcal{A}=(aw)^d\mathcal{A}$.

On the other hand, we have $$x^*=[a_w^{\tiny\textcircled{e}}wawa]^*=a^*[a_w^{\tiny\textcircled{e}}waw]=a^*[a_w^{\tiny\textcircled{e}}w(aw)^2](aw)^d.$$
Then $\mathcal{A}x^*\subseteq \mathcal{A}(aw)^d$.
Observing that $$\begin{array}{rll}
(aw)^d&=&(aw)^da_w^{\tiny\textcircled{e}}waw\\
&=&(aw)^d[a_w^{\tiny\textcircled{e}}waw][a_w^{\tiny\textcircled{e}}waw]\\
&=&(aw)^d[awa_w^{\tiny\textcircled{e}}w]^*[a_w^{\tiny\textcircled{e}}waw]\\
&=&(aw)^d[wa_w^{\tiny\textcircled{e}}w]^*a^*[a_w^{\tiny\textcircled{e}}waw]^*\\
&=&(aw)^d[wa_w^{\tiny\textcircled{e}}w]^*[a_w^{\tiny\textcircled{e}}wawa]^*\\
&=&(aw)^d[wa_w^{\tiny\textcircled{e}}w]^*x^*,
\end{array}$$ we see that $\mathcal{A}(aw)^d\subseteq \mathcal{A}x^*$. Therefore $\mathcal{A}x^*=\mathcal{A}(aw)^d$, as required.

$(2)\Rightarrow (3)$ If $rx=0$ for $r\in \mathcal{A}$, then $r(aw)^d=0$. If $r(aw)^d=0$ for $r\in \mathcal{A}$, then $rx=0$.
Hence $\ell(x)=\ell(aw)^d$. Likewise, we have $r(x^*)=r(aw)^d,$ as required.

$(3)\Rightarrow (1)$ By hypothesis, there exists $x\in \mathcal{A}$ such that
$$xwa^{d,w}wx=x, \ell(x)=\ell(a^{d,w}w), r(x^*)=r(a^{d,w}w).$$
In light of Theorem 2.10, $a^{d,w}\in \mathcal{A}_w^{\tiny\textcircled{\#}}$. Therefore $a\in \mathcal{A}_w^{\tiny{e}}$ by Theorem 4.1.\end{proof}

\begin{cor} Let $a,w\in \mathcal{A}$. Then the following are equivalent:\end{cor}
\begin{enumerate}
\item [(1)] $a\in \mathcal{A}^{\tiny{e}}$.
\vspace{-.5mm}
\item [(2)] $a\in \mathcal{A}^{d}$ and there exists $x\in \mathcal{A}$ such that
$$xa^dx=x, x\mathcal{A}=a^d\mathcal{A}, \mathcal{A}x^*=\mathcal{A}a^d.$$
\item [(3)] $a\in \mathcal{A}^{d}$ and there exists $x\in \mathcal{A}$ such that
$$xa^dx=x, \ell(x)=\ell(a^d), r(x^*)=r(a^d).$$
\end{enumerate}
\begin{proof} Straightforward by Theorem 4.3.\end{proof}

We now proceed to examine the weighted EP property of the generalized weighted EP inverse.

\begin{thm} Let $a\in \mathcal{A}_w^{\tiny\textcircled{e}}$. Then $a_w^{\tiny\textcircled{e}}\in \mathcal{A}_w^{\tiny{e}}$.
In this case, $$(a_w^{\tiny\textcircled{e}})_w^{\tiny{e}}=(aw)^2a_w^{\tiny\textcircled{e}}.$$\end{thm}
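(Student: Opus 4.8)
The plan is to reduce everything to Theorem 4.1, which already does the structural work: it identifies $a^{d,w}$ as a $w$-EP element and records the two formulas tying its $w$-EP inverse to $a_w^{\tiny\textcircled{e}}$. Since $a\in \mathcal{A}_w^{\tiny\textcircled{e}}$, Theorem 4.1 furnishes $a\in \mathcal{A}^{d,w}$, $c:=a^{d,w}\in \mathcal{A}_w^{\tiny{e}}$, and the identities
$$a_w^{\tiny\textcircled{e}}=(cw)^2c_w^{\tiny{e}}\qquad\text{and}\qquad c_w^{\tiny{e}}=(aw)^2a_w^{\tiny\textcircled{e}}.$$
So all the information I need about $a_w^{\tiny\textcircled{e}}$ is packaged in these two equations; the only thing left is to interpret the first one correctly.

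The key step is to notice that the first formula collapses, namely that $a_w^{\tiny\textcircled{e}}$ is $c$ itself. Indeed, $c$ is $w$-EP, so by Theorem 2.1 it is $w$-group invertible and its $w$-EP inverse coincides with its $w$-group inverse; writing $d:=c_w^{\tiny{e}}=c_w^{\#}$ we have $cwdwc=c$, $dwcwd=d$ and $cwd=dwc$. Using the commuting relation and then the group relation,
$$a_w^{\tiny\textcircled{e}}=(cw)^2c_w^{\tiny{e}}=cw(cwd)=cw(dwc)=cwdwc=c.$$
Hence $a_w^{\tiny\textcircled{e}}=c=a^{d,w}$, which lies in $\mathcal{A}_w^{\tiny{e}}$ by Theorem 4.1; this proves the first assertion.

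For the inverse I would simply substitute this identification into the second formula of Theorem 4.1: since $a_w^{\tiny\textcircled{e}}=c$, uniqueness of the $w$-EP inverse (it equals the unique $w$-group inverse) gives $(a_w^{\tiny\textcircled{e}})_w^{\tiny{e}}=c_w^{\tiny{e}}=(aw)^2a_w^{\tiny\textcircled{e}}$, which is exactly the claimed expression. I do not expect a genuine obstacle here; the only nonformal point is recognizing the collapse $(cw)^2c_w^{\#}=c$, a one-line consequence of the group-inverse axioms once Theorem 2.1 tells us that the $w$-EP inverse of a $w$-EP element is its $w$-group inverse. As an independent sanity check, the defining relations in Definition 1.1 are symmetric under interchanging $a$ and $x$, so the $w$-EP inverse of a $w$-EP element is again $w$-EP with $(c_w^{\tiny{e}})_w^{\tiny{e}}=c$; this matches $(a_w^{\tiny\textcircled{e}})_w^{\tiny{e}}=c_w^{\tiny{e}}$ and confirms the formula is consistent.
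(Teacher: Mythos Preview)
Your proof is correct but follows a genuinely different route from the paper. The paper proves Theorem~4.5 by a direct verification: it sets $x=(aw)^2a_w^{\tiny\textcircled{e}}$ and checks the three conditions of Theorem~2.1(2) for the element $a_w^{\tiny\textcircled{e}}$ by hand, using only the defining relations of $a_w^{\tiny\textcircled{e}}$ (in particular $a_w^{\tiny\textcircled{e}}wawa_w^{\tiny\textcircled{e}}=a_w^{\tiny\textcircled{e}}$ and $(awa_w^{\tiny\textcircled{e}}w)^*=awa_w^{\tiny\textcircled{e}}w$). No appeal to Theorem~4.1 is made.

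Your argument instead leverages Theorem~4.1 and extracts from it the stronger identification $a_w^{\tiny\textcircled{e}}=a^{d,w}$, obtained via the collapse $(cw)^2c_w^{\#}=cwdwc=c$ for $c=a^{d,w}$ and $d=c_w^{\tiny{e}}=c_w^{\#}$. Once this identity is in hand, both conclusions of the theorem are immediate from the formulas already recorded in Theorem~4.1. This is more conceptual and has the bonus of exhibiting the equality $a_w^{\tiny\textcircled{e}}=a^{d,w}$ explicitly, a fact the paper does not isolate. The paper's computation, on the other hand, is self-contained and does not rely on the machinery of Theorem~4.1, so it could in principle be placed earlier in the development.
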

\begin{proof} In view of Theorem 4.1, $a\in \mathcal{A}^{d,w}$. Let $x=(aw)^2a_w^{\tiny\textcircled{e}}$. Then we check that
$$\begin{array}{rll}
a_w^{\tiny\textcircled{e}}(wx)^2&=&[a_w^{\tiny\textcircled{e}}w](aw)^2a_w^{\tiny\textcircled{e}}wx\\
&=&(aw)[a_w^{\tiny\textcircled{e}}wawa_w^{\tiny\textcircled{e}}]wx\\
&=&(aw)a_w^{\tiny\textcircled{e}}w(aw)^2a_w^{\tiny\textcircled{e}}\\
&=&(aw)^2[a_w^{\tiny\textcircled{e}}wawa_w^{\tiny\textcircled{e}}]\\
&=&(aw)^2a_w^{\tiny\textcircled{e}}=x,\\
x(wa_w^{\tiny\textcircled{e}})^2&=&aw[a_w^{\tiny\textcircled{e}}wawa_w^{\tiny\textcircled{e}}]wa_w^{\tiny\textcircled{e}}\\
&=&a_w^{\tiny\textcircled{e}}wawa_w^{\tiny\textcircled{e}}\\
&=&a_w^{\tiny\textcircled{e}},\\
xwa_w^{\tiny\textcircled{e}}w&=&(aw)^2a_w^{\tiny\textcircled{e}}wa_w^{\tiny\textcircled{e}}w\\
&=&aw[a_w^{\tiny\textcircled{e}}wawa_w^{\tiny\textcircled{e}}]w\\
&=&awa_w^{\tiny\textcircled{e}}w,\\
(xwa_w^{\tiny\textcircled{e}}w)^*&=&xwa_w^{\tiny\textcircled{e}}w.
\end{array}$$

By virtue of Theorem 2.1, $a_w^{\tiny\textcircled{e}}\in \mathcal{A}_w^{\tiny{e}}$. Moreover, we have
$(a_w^{\tiny\textcircled{e}})_w^{\tiny{e}}=(aw)^2a_w^{\tiny\textcircled{e}}.$\end{proof}

\begin{cor} Let $a\in \mathcal{A}_w^{\tiny\textcircled{e}}$. Then
$[(a_w^{\tiny\textcircled{e}})_w^{\tiny\textcircled{e}}]_w^{\tiny\textcircled{e}}=a_w^{\tiny\textcircled{e}}.$
\end{cor}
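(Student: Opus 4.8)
The plan is to reduce the whole statement to Theorem 4.5 together with two elementary observations about $w$-EP elements, so that essentially no new computation is required. Write $e=a_w^{\tiny\textcircled{e}}$. By Theorem 4.5 we already know $e\in\mathcal{A}_w^{\tiny{e}}$ and $e_w^{\tiny{e}}=(aw)^2e$. The key point I want to isolate is that, on the subclass $\mathcal{A}_w^{\tiny{e}}$, the generalized $w$-EP inverse collapses to the ordinary $w$-EP inverse, and that the latter is an involution; granting these two facts, the corollary follows by applying Theorem 4.5 twice and tracking the iterates.

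First I would record that every $w$-EP element is a generalized $w$-EP element with the same inverse. Indeed, if $b\in\mathcal{A}_w^{\tiny{e}}$ with $x=b_w^{\tiny{e}}=b_w^{\#}$, then the defining relations give $b(wx)^2=x$, $(bwxw)^*=bwxw$ and $(xwbw)^*=xwbw$, while $bwx=xwb$ (hence $bwxw=xwbw$) together with $bwxwb=b$ yields $xw(bw)^2=(xwbw)bw=(bwxw)bw=bwxwbw=bw$, so that $bw-xw(bw)^2=0$. Thus the quasinilpotent limit in the definition of $\mathcal{A}_w^{\tiny\textcircled{e}}$ vanishes, whence $b\in\mathcal{A}_w^{\tiny\textcircled{e}}$, and by uniqueness of the generalized $w$-EP inverse $b_w^{\tiny\textcircled{e}}=b_w^{\tiny{e}}$.

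Next I would note that the four defining relations of a $w$-EP element, namely $b(wx)^2=x$, $x(wb)^2=b$, $(bwxw)^*=bwxw$ and $(xwbw)^*=xwbw$, are symmetric under the interchange $b\leftrightarrow x$; hence $x=b_w^{\tiny{e}}$ is itself $w$-EP and $(b_w^{\tiny{e}})_w^{\tiny{e}}=b$ by uniqueness. Combining the two observations with Theorem 4.5, I compute the iterate in two steps: since $e\in\mathcal{A}_w^{\tiny{e}}$, the first observation gives $(a_w^{\tiny\textcircled{e}})_w^{\tiny\textcircled{e}}=e_w^{\tiny\textcircled{e}}=e_w^{\tiny{e}}=(aw)^2e$. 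Setting $c=e_w^{\tiny{e}}=(aw)^2e$, the involution property shows $c\in\mathcal{A}_w^{\tiny{e}}$ with $c_w^{\tiny{e}}=e$, and the first observation applied to $c$ gives $c_w^{\tiny\textcircled{e}}=c_w^{\tiny{e}}=e$. Therefore $[(a_w^{\tiny\textcircled{e}})_w^{\tiny\textcircled{e}}]_w^{\tiny\textcircled{e}}=c_w^{\tiny\textcircled{e}}=e=a_w^{\tiny\textcircled{e}}$, as required.

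The only genuinely non-formal point, and the step I would treat most carefully, is the appeal to uniqueness of the generalized $w$-EP inverse when identifying $b_w^{\tiny\textcircled{e}}$ with $b_w^{\tiny{e}}$ at each level of the iteration; this is precisely what lets me pass freely between the two inverses. Everything else is the symmetry of the defining relations and the already-proved identity $e_w^{\tiny{e}}=(aw)^2e$ of Theorem 4.5, so I expect no computational obstacle beyond verifying the single vanishing $bw-xw(bw)^2=0$.
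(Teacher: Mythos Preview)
Your proof is correct and follows essentially the same route as the paper: both rely on Theorem~4.5 together with the observation that $\mathcal{A}_w^{\tiny{e}}\subseteq\mathcal{A}_w^{\tiny\textcircled{e}}$ with coinciding inverses. The only difference is cosmetic: the paper applies Theorem~4.5 a second time (to $e=a_w^{\tiny\textcircled{e}}$) to get $\big(e_w^{\tiny\textcircled{e}}\big)_w^{\tiny\textcircled{e}}=(ew)^2e_w^{\tiny\textcircled{e}}=(ew)^2(aw)^2e$ and then simplifies this to $e$ by a direct computation using $ewawe=e$, whereas you replace that second step by the cleaner symmetry observation $(b_w^{\tiny{e}})_w^{\tiny{e}}=b$.
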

\begin{proof} By using Theorem 4.5, we have $$(a_w^{\tiny\textcircled{e}})_w^{\tiny\textcircled{e}}=(a_w^{\tiny\textcircled{e}})_w^{\tiny{e}}=(aw)^2a_w^{\tiny\textcircled{e}}.$$
Moreover, we derive that $$\begin{array}{rll}
\big((a_w^{\tiny\textcircled{e}})_w^{\tiny\textcircled{e}}\big)_w^{\tiny\textcircled{e}}&=&
[a_w^{\tiny\textcircled{e}}w]^2(a_w^{\tiny\textcircled{e}})_w^{\tiny\textcircled{e}}\\
&=&[a_w^{\tiny\textcircled{e}}w]^2(aw)^2a_w^{\tiny\textcircled{e}}\\
&=&[a_w^{\tiny\textcircled{e}}w]aw[a_w^{\tiny\textcircled{e}}wawa_w^{\tiny\textcircled{e}}]\\
&=&a_w^{\tiny\textcircled{e}}wawa_w^{\tiny\textcircled{e}}\\
&=&a_w^{\tiny\textcircled{d}}.
\end{array}$$ This completes the proof.\end{proof}

\section{Weighted *-DMP elements}

In this section, we are concerned with weighted *-DMP elements in a Banach *-algebra. Many properties of *-DMP elements are thereby extended to the wider cases.

\begin{thm} Let $a\in \mathcal{A}$. Then the following are equivalent:\end{thm}
\begin{enumerate}
\item [(1)] $a\in \mathcal{A}_w^{\tiny\textcircled{E}}$.
\vspace{-.5mm}
\item [(2)] There exist $x,y\in \mathcal{A}$ such that $$a=x+y, x^*y=ywx=0, x\in \mathcal{A}_w^{\tiny{e}},~y\in \mathcal{A}_w^{nil}.$$
\end{enumerate}
\begin{proof} $(1)\Rightarrow (3)$ By hypothesis, we can find $x\in \mathcal{A}$ such that
$$a(wx)^2=x, (awxw)^*=awxw, (xwaw)^*=xwaw, (aw)^n=xw(aw)^{n+1}.$$ Let $z=awxwa$ and $y=a-awxwa$. As in the proof of Theorem 3.1, we verify that
$$z^*y=0, ywz=0, z_w^{\tiny{e}}=x.$$ We observe that
$$\begin{array}{rll}
yw&=&aw-awxwaw,\\
\big(aw-xw(aw)^2\big)^2&=&[aw-xw(aw)^2]aw,\\
\big(aw-xw(aw)^2\big)^n&=&(aw)^n-(xw)(aw)^{n+1}=0.
\end{array}$$ Then $aw-xw(aw)^2$ is nilpotent, and so $yw=aw-awxwaw\in \mathcal{A}^{nil}$, as desired.

$(2)\Rightarrow (1)$ By hypothesis, there exist $z,y\in \mathcal{A}$ such that $a=z+y, z^*y=ywz=0, z\in \mathcal{A}_w^{\tiny{e}},
y\in \mathcal{A}^{nil}.$ Set $x=z_w^{\tiny{e}}$. In light of Lemma 2.9, we have
$xwzwx=x.$ Moreover, we check that
$$a(wx)^2=x, (awxw)^*=awxw, (xwaw)^*=xwaw.$$ Write $(yw)^n=0$ for some $n\in {\Bbb N}$. Then
$$\begin{array}{rll}
(1-xwaw)yw&=&yw-xw(z+y)wyw=yw-xwzwyw+xwywyw\\
&=&yw-(zwxw)^*yw+x(wz)^2wywyw\\
&=&yw-(wxw)^*(z^*y)w+xwzwzwywyw\\
&=&yw-(wxw)^*(z^*y)w+zw(xwzw)ywyw\\
&=&yw-(wxw)^*(z^*y)w+zw(zwxw)ywyw\\
&=&yw-(wxw)^*(z^*y)w+zw(zwxw)^*ywyw\\
&=&yw-(wxw)^*(z^*y)w+zw(wxw)^*(z^*y)wyw\\
&=&yw.
\end{array}$$ Hence, we prove that
$$\begin{array}{rl}
&(aw)^n-xw(aw)^{n+1}\\
=&[1-xwaw](aw)^n=[(1-xwaw)aw](aw)^{n-1}\\
=&[(1-xwaw)(z+y)w](aw)^{n-1}=[(1-xwaw)zw+yw](aw)^{n-1}\\
=&[(1-xw(z+y)w)zw+yw](aw)^{n-1}\\
=&[(1-xwzw)zw+yw](aw)^{n-1}=yw(aw)^{n-1}=(yw)^n=0.
\end{array}$$
THerefore $(aw)^n=xw(aw)^{n+1}$, as asserted.\end{proof}

\begin{cor} Let $a,b\in \mathcal{A}_w^{\tiny\textcircled{E}}$. If $awb=bwa=a^*b=0$, then $a+b\in \mathcal{A}_w^{\tiny\textcircled{E}}$. In this case,
$$(a+b)_w^{\tiny\textcircled{E}}=a_w^{\tiny\textcircled{E}}+b_w^{\tiny\textcircled{E}}.$$\end{cor}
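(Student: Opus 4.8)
The plan is to push everything through the decomposition characterization in Theorem 5.1. Writing $x=a_w^{\tiny\textcircled{E}}$ and $x'=b_w^{\tiny\textcircled{E}}$, set $a_1=awxwa$, $a_2=a-a_1$, $b_1=bwx'wb$, $b_2=b-b_1$, so that $a=a_1+a_2$ and $b=b_1+b_2$ are the $w$-EP/$w$-nilpotent splittings supplied by Theorem 5.1, with $(a_1)_w^{\tiny{e}}=x$ and $(b_1)_w^{\tiny{e}}=x'$. The target is to prove that $a+b=(a_1+b_1)+(a_2+b_2)$ is again a splitting of this type, i.e. that $a_1+b_1\in\mathcal{A}_w^{\tiny{e}}$, $a_2+b_2\in\mathcal{A}_w^{nil}$, $(a_1+b_1)^*(a_2+b_2)=0$ and $(a_2+b_2)w(a_1+b_1)=0$; the direction $(2)\Rightarrow(1)$ of Theorem 5.1 then yields $a+b\in\mathcal{A}_w^{\tiny\textcircled{E}}$ with $w$-*-DMP inverse $(a_1+b_1)_w^{\tiny{e}}$.

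First I would turn the three hypotheses into componentwise orthogonality. Because $a_1,b_1,x,x'$ all lie in the one-sided ideals generated by $a$ and $b$ (for instance $b_1=bwx'wb$ and $x=awxwx$), and since $b^*a=(a^*b)^*=0$, the relations $awb=bwa=a^*b=0$ propagate by short factorizations to $awb_i=bwa_i=a^*b_i=b^*a_i=0$ for $i=1,2$; e.g. $a^*b_1=(a^*b)wx'wb=0$ and then $a_1^*b_1=a^*w^*x^*w^*(a^*b_1)=0$. From these one reads off $a_1wb_1=b_1wa_1=a_1^*b_1=b_1^*a_1=0$, the mixed products $a_1^*b_2=b_1^*a_2=a_2wb_1=b_2wa_1=0$, and $a_2wb_2=b_2wa_2=0$.

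The main obstacle is showing $a_1+b_1\in\mathcal{A}_w^{\tiny{e}}$ with EP inverse $x+x'$, for which I would verify the four defining identities of a $w$-EP element for the pair $(a_1+b_1,x+x')$. The genuinely new input is the pair of cross vanishings $xwb_1=0$ and $x'wa_1=0$. These follow from the fact that $p_A:=awxw=a_1wxw=xwa_1w$ is a self-adjoint idempotent: then $p_Ab_1=p_A^*b_1=w^*x^*w^*(a^*b_1)=0$, while Lemma 2.9 applied to $a_1$ gives $x=xwa_1wx$, whence $xwb_1=xwa_1wxwb_1=xw(p_Ab_1)=0$; symmetrically $x'wa_1=0$ using $p_B:=bwx'w$. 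Together with $a_1wb_1=b_1wa_1=b_1wx=a_1wx'=0$ and the EP-commutations $a_1wxw=xwa_1w=p_A$, $b_1wx'w=x'wb_1w=p_B$, every mixed term in the expansions of $(a_1+b_1)(w(x+x'))^2$, $(x+x')(w(a_1+b_1))^2$, $(a_1+b_1)w(x+x')w$ and $(x+x')w(a_1+b_1)w$ cancels, leaving respectively $x+x'$, $a_1+b_1$, $p_A+p_B$ and $p_A+p_B$. Since $p_A+p_B$ is self-adjoint, the four identities hold and $a_1+b_1\in\mathcal{A}_w^{\tiny{e}}$ with $(a_1+b_1)_w^{\tiny{e}}=x+x'$.

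For the remaining part, $a_2wb_2=b_2wa_2=0$ shows that $a_2w$ and $b_2w$ are mutually annihilating nilpotents, so $(a_2w+b_2w)^n=(a_2w)^n+(b_2w)^n$ and hence $a_2+b_2\in\mathcal{A}_w^{nil}$. Combined with the orthogonality relations established above, $a+b=(a_1+b_1)+(a_2+b_2)$ meets all the requirements of Theorem 5.1(2), so $a+b\in\mathcal{A}_w^{\tiny\textcircled{E}}$, and its $w$-*-DMP inverse is $(a_1+b_1)_w^{\tiny{e}}=x+x'=a_w^{\tiny\textcircled{E}}+b_w^{\tiny\textcircled{E}}$, which is the asserted formula.
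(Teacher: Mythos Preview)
Your proof is correct and follows essentially the same route as the paper's: both arguments apply Theorem 5.1 to split $a$ and $b$ into their $w$-EP plus $w$-nilpotent parts, then verify that the sum of the EP parts is again $w$-EP (with inverse $a_w^{\tiny\textcircled{E}}+b_w^{\tiny\textcircled{E}}$), that the sum of the nilpotent parts is $w$-nilpotent, and that the two required orthogonality relations survive. Your treatment of the cross-vanishings such as $xwb_1=0$ via the projection $p_A=awxw$ is in fact more explicit than the paper's own argument, which handles the analogous steps more tersely.
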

\begin{proof} In view of Theorem 5.1, we have weighted generalized EP-decompositions:
$$\begin{array}{c}
a=x+y, x^*y=ywx=0, x\in \mathcal{A}_w^{\tiny\textcircled{\#}}, y\in \mathcal{A}_w^{nil};\\
b=s+t, s^*t=tws=0, s\in \mathcal{A}_w^{\tiny\textcircled{\#}}, t\in \mathcal{A}_w^{nil}.
\end{array}$$ Explicitly, we have $x=awa_w{\tiny\textcircled{D}}wa$ and $s=bwb_w^{\tiny\textcircled{E}}wb$.
Then $a+b=(x+s)+(y+t)$. We directly check that
$$\begin{array}{rll}
(x+s)w(x_w^{\tiny\textcircled{\#}}+s_w^{\tiny\textcircled{\#}})&=&xwx_w^{\tiny\textcircled{\#}}+sws_w^{\tiny\textcircled{\#}}\\
&=&x_w^{\tiny\textcircled{\#}}wx+s_w^{\tiny\textcircled{\#}}ws\\
&=&(x_w^{\tiny\textcircled{\#}}+s_w^{\tiny\textcircled{\#}})w(x+s),\\
\big((x+s)w\big)^2[x_w^{\tiny\textcircled{\#}}+s_w^{\tiny\textcircled{\#}}]&=&(xw)^2x_w^{\tiny\textcircled{\#}}+(sw)^2s_w^{\tiny\textcircled{\#}}\\
&=&x+s,\\
(x+s)w(x_w^{\tiny\textcircled{\#}}+s_w^{\tiny\textcircled{\#}})w&=&xwx_w^{\tiny\textcircled{\#}}w+sws_w^{\tiny\textcircled{\#}})w,\\
\big((x+s)w(x_w^{\tiny\textcircled{\#}}+s_w^{\tiny\textcircled{\#}})w\big)^*&=&(x+s)w(x_w^{\tiny\textcircled{\#}}+s_w^{\tiny\textcircled{\#}})w.
\end{array}$$
Then $x+s\in \mathcal{A}_w^{\tiny\textcircled{\#}}$ and $(x+s)_w^{\tiny\textcircled{\#}}=x_w^{\tiny\textcircled{\#}}+s_w^{\tiny\textcircled{\#}}.$
Since $ywt=(a-awa_w{\tiny\textcircled{D}}wa)w(b-bwb_w^{\tiny\textcircled{E}}wb)=0$, we have $y+t\in \mathcal{A}_w^{nil}$(see~\cite[Lemma 15.2.2]{C1}).

Obviously, we check that $$\begin{array}{rll}
(x+s)^*(y+t)&=&x^*y+x^*t+s^*y+s^*t=x^*t+s^*y\\
&=&(wa_w{\tiny\textcircled{D}}wa)^*(a^*b)(1-wb_w^{\tiny\textcircled{E}}wb)\\
&+&(wb_w^{\tiny\textcircled{E}}wb)^*(b^*a)(1-wa_w{\tiny\textcircled{D}}wa)=0,\\
(y+t)w(x+s)&=&ywx+yws+twx+tws=yws+twx\\
&=&(a-awa_w{\tiny\textcircled{D}}wa)wbwb_w^{\tiny\textcircled{E}}wb\\
&+&(b-bwb_w^{\tiny\textcircled{E}}wb)wawa_w{\tiny\textcircled{D}}wa=0.
\end{array}$$ By using Theorem 5.1, $(a+b)_w^{\tiny\textcircled{E}}=(x+s)_w{\tiny\textcircled{\#}}=
x_w^{\tiny\textcircled{\#}}+s_w^{\tiny\textcircled{\#}}=a_w^{\tiny\textcircled{E}}+b_w^{\tiny\textcircled{E}},$ as asserted.\end{proof}

We come now to derive a new characterizations of *-DMP elements.

\begin{cor} Let $a\in \mathcal{A}$. Then the following are equivalent:\end{cor}
\begin{enumerate}
\item [(1)] $a\in \mathcal{A}$ is *-DMP.
\vspace{-.5mm}
\item [(2)] There exist $x,y\in \mathcal{A}$ such that $$a=x+y, x^*y=yx=0, x\in \mathcal{A}^{\tiny{EP}},~y\in \mathcal{A}_w^{nil}.$$
\end{enumerate}
\begin{proof} This is obvious by Theorem 5.1.\end{proof}

\begin{lem} Let $a\in \mathcal{A}$. Then $a\in \mathcal{A}_w^{\tiny\textcircled{E}}$ if and only if $a\in \mathcal{A}_w^{\tiny\textcircled{e}}\bigcap \mathcal{A}^{D,w}$.\end{lem}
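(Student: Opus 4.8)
The plan is to identify the gap between $\mathcal{A}_w^{\tiny\textcircled{E}}$ and $\mathcal{A}_w^{\tiny\textcircled{e}}$ with the gap between a \emph{nilpotent} and a \emph{quasinilpotent} remainder, and to use the hypothesis $a\in\mathcal{A}^{D,w}$ (equivalently $aw\in\mathcal{A}^D$) exactly to close that gap. First I would record the trivial inclusion $\mathcal{A}_w^{\tiny\textcircled{E}}\subseteq\mathcal{A}_w^{\tiny\textcircled{e}}$: comparing the two definitions, the only difference is that the $w$-*-DMP condition demands the exact identity $(aw)^n=xw(aw)^{n+1}$, which forces $(aw)^n-xw(aw)^{n+1}=0$ and hence makes the defining limit of $\mathcal{A}_w^{\tiny\textcircled{e}}$ vanish with the very same $x$. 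Thus membership in $\mathcal{A}_w^{\tiny\textcircled{e}}$ is automatic in both directions, and the real content is to match the Drazin invertibility $aw\in\mathcal{A}^D$ with the passage from a quasinilpotent to a nilpotent remainder.

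For the forward implication I would feed $a\in\mathcal{A}_w^{\tiny\textcircled{E}}$ into Theorem 5.1 to obtain a splitting $a=z+y$ with $z\in\mathcal{A}_w^{\tiny{e}}$, $y\in\mathcal{A}_w^{nil}$ and $z^*y=ywz=0$. Writing $z_w^{\#}$ for the $w$-group inverse of $z$, the relations $zwz_w^{\#}wz=z$, $z_w^{\#}wzwz_w^{\#}=z_w^{\#}$, $zwz_w^{\#}=z_w^{\#}wz$ show that $zw\in\mathcal{A}^{\#}$ with $(zw)^{\#}=z_w^{\#}w$; moreover $yw$ is nilpotent and $ywz=0$ yields $(yw)(zw)=0$. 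Hence $aw=zw+yw$ is a sum of a group invertible element and a nilpotent one annihilating it on the appropriate side, and the additive property of the Drazin inverse (the finite-index counterpart of the result~\cite[Lemma 15.2.2]{C1} already used in Theorem 4.1) gives $aw\in\mathcal{A}^D$, i.e. $a\in\mathcal{A}^{D,w}$.

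For the reverse implication I would start from $a\in\mathcal{A}_w^{\tiny\textcircled{e}}\cap\mathcal{A}^{D,w}$ and choose, via Theorem 3.3, an $x$ with $a(wx)^2=x$, $awxw=xwaw=(awxw)^*$ and $u:=aw-xw(aw)^2\in\mathcal{A}^{qnil}$. A short computation from $a(wx)^2=x$ gives $p\,xw=xw=xw\,p$, whence $p:=awxw=xwaw$ is a self-adjoint idempotent ($p^2=p$), $p$ commutes with $aw$ (both $p\,aw$ and $aw\,p$ equal $awxwaw$), and $u=(1-p)aw=aw(1-p)$. Setting $c:=p\,aw$, the identities $c\,xw=p=xw\,c$ exhibit $c$ as group invertible with $c^{\#}=xw$ inside the corner $p\mathcal{A}p$; and since $p$ commutes with $aw$ one obtains $cu=uc=0$, so $aw=c+u$ with $c\in\mathcal{A}^{\#}$, $u\in\mathcal{A}^{qnil}$ and $cu=uc=0$. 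By uniqueness of the generalized $w$-Drazin inverse this forces $(aw)^d=c^{\#}$ and $u=aw-(aw)^2(aw)^d$. Now $aw\in\mathcal{A}^D$ provides $(aw)^D$; since a nilpotent remainder is in particular quasinilpotent, $(aw)^D$ satisfies the defining relations of $(aw)^d$, so $(aw)^d=(aw)^D$ and therefore $u=aw-(aw)^2(aw)^D$ is genuinely nilpotent. Thus $u^n=0$ for some $n$, i.e. $(1-p)(aw)^n=0$, i.e. $(aw)^n=xw(aw)^{n+1}$; together with $a(wx)^2=x$, $(awxw)^*=awxw$, $(xwaw)^*=xwaw$ this is exactly the definition of $\mathcal{A}_w^{\tiny\textcircled{E}}$, so $a\in\mathcal{A}_w^{\tiny\textcircled{E}}$.

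The main obstacle is the reverse step of upgrading $u$ from quasinilpotent to nilpotent: Drazin invertibility of $aw$ together with $u$ being merely \emph{some} quasinilpotent summand is not enough, since such summands need not be nilpotent. The crucial point is to recognize $u$ as the intrinsic spectral remainder $aw-(aw)^2(aw)^d$ of $aw$, and for this one genuinely needs the two-sided annihilation $cu=uc=0$, which I extract from the commutation $p\,aw=aw\,p$ furnished by $awxw=xwaw$; once $u$ is identified with that spectral remainder, finiteness of the Drazin index finishes the argument. A self-contained alternative, bypassing the uniqueness step, is the elementary fact that a quasinilpotent Drazin-invertible element is nilpotent: its spectral idempotent $uu^{D}$ would be at once a unit and a quasinilpotent element of the corner it generates, hence $0$. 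I would fall back on this should the identification of $u$ with the spectral remainder prove cumbersome.
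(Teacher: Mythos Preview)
Your argument is correct. The forward implication is essentially the paper's: both feed the $w$-*-DMP element into Theorem~5.1, note that $zw\in\mathcal{A}^{\#}$ via $z\in\mathcal{A}_w^{\#}$, that $yw$ is nilpotent with $(yw)(zw)=0$, and invoke the additive Drazin result to get $aw\in\mathcal{A}^{D}$.

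For the converse the two arguments diverge. The paper does not isolate the commutation $awxw=xwaw$ or the decomposition $aw=c+u$; instead it takes $x=a_w^{\tiny\textcircled{e}}$, invokes the representation of Theorem~4.1,
\[
a_w^{\tiny\textcircled{e}}=[a^{D,w}w]^2[a^{D,w}]_w^{\tiny{e}},
\]
and then computes directly that $(yw)^m=(aw)^m-xw(aw)^{m+1}=0$ by unwinding this formula together with $(aw)^m=(aw)^D(aw)^{m+1}$. Your route is more structural: you recognise $aw=c+u$ with $cu=uc=0$, $c\in\mathcal{A}^{\#}$, $u\in\mathcal{A}^{qnil}$ as the core--quasinilpotent decomposition of $aw$, whence $(aw)^d=c^{\#}=xw$, and then use the standard fact that $(aw)^D=(aw)^d$ whenever $aw\in\mathcal{A}^D$ to force $u=aw-(aw)^2(aw)^D$ to be nilpotent. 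This bypasses Theorem~4.1 entirely and makes transparent \emph{why} Drazin invertibility upgrades the quasinilpotent remainder to a nilpotent one; the paper's computation, by contrast, stays within the formulas already developed and illustrates the use of Theorem~4.1. Either way, the key identity $awxw=xwaw$ (which you correctly extract from $a(wx)^2=x$ and $(awxw)^*=xwaw$) is what makes the spectral identification possible.
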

\begin{proof} $\Longrightarrow $ In view of Theorem 5.1, there exist $x,y\in \mathcal{A}$ such that $$a=x+y, x^*y=ywx=0, x\in \mathcal{A}_w^{\tiny{e}},~y\in \mathcal{A}_w^{nil}.$$ Hence, $aw=xw+yw$.
Since $x\in \mathcal{A}_w^{\tiny{e}}$, it follows by Corollary 2.5 that $x\in \mathcal{A}_w^{\#}$. Then $xw\in \mathcal{A}^{D}$.
On the other hand, $yw\in \mathcal{A}^{\#}\subseteq \mathcal{A}^D$. Since $(yw)(xw)=0$, $aw\in \mathcal{A}^D$ (see~\cite[Lemma 15.2.2]{C1}).
Therefore $a\in \mathcal{A}^{D,w}$, as desired.

$\Longleftarrow $ Let $x=a_w^{\tiny\textcircled{e}}$. Then
$$\begin{array}{c}
a(wx)^2=x, (awxw)^*=awxw, (xwaw)^*=xwaw,\\
\lim\limits_{n\to \infty}||(aw)^n-xw(aw)^{n+1}||^{\frac{1}{n}}=0.
\end{array}$$ Let $z=awxwa$ and $y=a-awxwa$. As in the proof of Theorem 3.1, we verify that
$$a=z+y, z^*y=ywz=0, z\in \mathcal{A}_w^{\tiny{e}},~y\in \mathcal{A}_w^{qnil}.$$
Since $a\in \mathcal{A}^{D,w}$, we see that $(aw)^m=(aw)^D(aw)^{m+1}$ for some $m\in {\Bbb N}$. In view of Theorem 4.1,
$$x=a_w^{\tiny\textcircled{e}}=[a^{D,w}w]^2[a^{D,w}]_w^{\tiny{e}}.$$
Hence, $yw=aw-awxwaw=(1-xwaw)aw$, and so $$\begin{array}{rl}
&(yw)^m\\
=&(1-xwaw)(aw)^m=(aw)^m-(xw)(aw)^{m+1}\\
=&(aw)^m-[a^{D,w}w]^2[a^{D,w}]_w^{\tiny{e}}w(aw)^{m+1}\\
=&(aw)^m-[(aw)^D]^2[((aw)^D)^2a]_w^{\tiny{e}}w(aw)^{m+1}\\
=&(aw)^m-[((aw)^D)^2a]_w^{\tiny{e}}w[(aw)^D]^2(aw)^{m+1}\\
=&\big(((aw)^D)^2a-[((aw)^D)^2a]_w^{\tiny{e}}[w((aw)^D)^2a]^2\big)w(aw)^{m+1}\\
=&0.
\end{array}$$
Thus $yw\in \mathcal{A}^{nil}$, and therefore $a\in \mathcal{A}_w^{\tiny\textcircled{E}}$ by Theorem 5.1.\end{proof}

\begin{thm} Let $a\in \mathcal{A}$. Then the following are equivalent:\end{thm}
\begin{enumerate}
\item [(1)] $a\in \mathcal{A}_w^{\tiny\textcircled{E}}$.
\vspace{-.5mm}
\item [(2)] There exists some $x\in \mathcal{A}$ such that $$a(wx)^2=x, (awxw)^*=xwaw, (aw)^n=xw(aw)^{n+1}$$
for some $n\in {\Bbb N}$.
\vspace{-.5mm}
\item [(3)] There exists a projection $p\in \mathcal{A}$ such that
$$aw+p\in \mathcal{A}^{-1}, 1-p\in \mathcal{A}w, awp=paw\in \mathcal{A}^{nil}.$$.
\vspace{-.5mm}
\item [(4)] $a\in \mathcal{A}^{D,w}$ and $a^{D,w}\in \mathcal{A}_w^{\tiny{e}}$.
\end{enumerate}
\begin{proof} $(1)\Rightarrow (2)$ In view of Theorem 3.3, there exists $x\in \mathcal{A}$ such that
$$a(wx)^2=x, (awxw)^*=xwaw, aw-xw(aw)^2\in \mathcal{A}^{qnil}$$ for some $n\in {\Bbb N}$.
As in the proof of Theorem 3.3, $aw-awa_w^{\tiny\textcircled{e}}waw\in \mathcal{A}^{nil}$.
Therefore $aw[1-awa_w^{\tiny\textcircled{e}}w]\in \mathcal{A}^{nil}$. This implies that $aw-xw(aw)^2\in \mathcal{A}^{nil}$.
Hence, $(aw)^n=xw(aw)^{n+1}$ for some $n\in {\Bbb N}$.

$(2)\Rightarrow (1)$ As in the proof of Theorem 3.3, $awxw=xwaw$. Then
$(awxw)^*=awxw$ and $(xwaw)^*=xwaw$, as required.

$(1)\Rightarrow (3)$ By virtue of Lemma 5.4, $a\in \mathcal{A}_w^{\tiny\textcircled{e}}\bigcap \mathcal{A}^{D,w}$. According to Theorem 3.5,
we can find a projection $p\in \mathcal{A}$ such that
$$aw+p\in \mathcal{A}^{-1}, 1-p\in \mathcal{A}w, awp=paw\in \mathcal{A}^{qnil}.$$ $awp=aw[1-awa_w^{\tiny\textcircled{e}}w]$.
By the argument above, we see tht  $aw-awa_w^{\tiny\textcircled{e}}waw\in \mathcal{A}^{nil}$.  Therefore $awp\in \mathcal{A}^{nil},$ as required.

$(3)\Rightarrow (1)$ According to Theorem 3.5, $a\in \mathcal{A}_w^{\tiny\textcircled{e}}$. As in the proof of ~\cite[Theorem 4.2]{KP}, we see that
$aw$ has Drazin inverse, and then $a\in \mathcal{A}^{D,w}$. Hence $a\in \mathcal{A}_w^{\tiny\textcircled{E}}$ by Lemma 5.4.

$(1)\Rightarrow (4)$ By the argument above, $a\in \mathcal{A}^{D,w}$ and $a^{D,w}=a^{d,w}$. By using Theorem
$a^{D,w}\in \mathcal{A}_w^{\tiny{e}}$.

$(4)\Rightarrow (1)$ In view of Theorem 4.1, $a\in \mathcal{A}_w^{\tiny\textcircled{e}}$. Therefore we complete the proof by Lemma 5.4.\end{proof}

\begin{cor} Let $a\in \mathcal{A}$. Then the following are equivalent:\end{cor}
\begin{enumerate}
\item [(1)] $a\in \mathcal{A}$ is *-DMP.
\vspace{-.5mm}
\item [(2)] There exists $x\in \mathcal{A}$ such that
$$ax^2=x, (ax)^*=xa, a^n=xa^{n+1}$$ for some $n\in {\Bbb N}$.
\vspace{-.5mm}
\item [(3)] There exists a projection $p\in \mathcal{A}$ such that
$$a+p\in \mathcal{A}^{-1}, ap=pw\in \mathcal{A}^{nil}.$$.
\item [(4)] $a\in \mathcal{A}^{D}$ and $a^{D}\in \mathcal{A}^{\tiny{EP}}$.
\end{enumerate}
\begin{proof} This is obvious by choosing $w=1$ in Theorem 5.5.\end{proof}

It is a well-established fact that an element $a$ in $\mathcal{A}$ is *-DMP if and only if there exists a $k\in {\Bbb N}$
such that $a^k$ is EP. We broaden the scope of this finding to a more general setting.

\begin{thm} Let $a\in \mathcal{A}$. Then the following are equivalent:\end{thm}
\begin{enumerate}
\item [(1)] $a\in \mathcal{A}_w^{\tiny\textcircled{E}}.$
\vspace{-.5mm}
\item [(2)] There exists $m\in {\Bbb N}$ such that $a(wa)^{k-1}\in \mathcal{A}_w^{\tiny{e}}$ for any $k\geq m$.
\vspace{-.5mm}
\item [(3)] $a(wa)^{k-1}\in \mathcal{A}_w^{\tiny{e}}$ for some $k\in {\Bbb N}$.
\end{enumerate}
\begin{proof} $(1)\Rightarrow (2)$ By hypothesis, there exist $z\in \mathcal{A}$ and $m\geq 2$ such that
$$a(wz)^2=z, (awzw)^*=zwaw, (aw)^{m-1}=(zw)(aw)^{m}.$$
Then we easily see that $awzw=zwaw$. Let $x=(zw)^{k-1}z (k\geq m)$. Then $(aw)^{k-1}=(zw)(aw)^{k}$, and so

$$\begin{array}{rll}
[a(wa)^{k-1}]wxw&=&[a(wa)^{k-1}]w(zw)^{k-1}zw\\
&=&(aw)^k(zw)^{k-1}zw=(aw)^2zwzw\\
&=&[zw(aw)^2]zw=(aw)(zw)=(zw)(aw)\\
&=&[a(wz)^2]waw=aw(zw)^2aw\\
&=&(zw)^2(aw)aw=(zw)^k(aw)^{k-1}aw\\
&=&(zw)^{k-1}zw[a(wa)^{k-1}]w\\
&=&xw[a(wa)^{k-1}]w,\\
\big((a(wa)^{k-1})wxw\big)^*&=&(awzw)^*=awzw=(a(wa)^{k-1})wxw,\\
(zw)^{k-1}z\big(wa(wa)^{k-1}\big)^2&=&(zw)^{k-1}z(wa(wa)^{k-1}\big)(wa(wa)^{k-1}\big)\\
&=&(zw)^{k}(aw)^ka(wa)^{k-1}=(zw)(aw)a(wa)^{k-1}\\
&=&(zw)(aw)^{k}a=(aw)^{k-1}a=a(wa)^{k-1}.
\end{array}$$

Therefore $a(wa)^{k-1}\in \mathcal{A}_w^{\tiny{e}}$ by Theorem 2.6.

$(2)\Rightarrow (3)$ This is trivial.

$(3)\Rightarrow (1)$ Lt $x=a(wa)^{k-2}w[a(wa)^{k-1}]_w^{\tiny{e}}$. Then we check that

$$\begin{array}{rll}
awx&=&awa(wa)^{k-2}w[a(wa)^{k-1}]_w^{\tiny{e}}=(aw)^k[a(wa)^{k-1}]_w^{\tiny{e}},\\
awxw&=&a(wa)^{k-1}w[a(wa)^{k-1}]_w^{\tiny{e}}w\\
&=&(aw)^k[a(wa)^{k-1}]_w^{\tiny{e}}w=a(wa)^{k-2}waw[a(wa)^{k-1}]_w^{\tiny{e}}w\\
&=&a(wa)^{k-2}w[a(wa)^{k-1}]_w^{\tiny{e}}waw=xwaw,\\
(awxw)^*&=&xwaw,\\
a(wx)^2&=&(aw)^k[a(wa)^{k-1}]_w^{\tiny{e}}wa(wa)^{k-2}w[a(wa)^{k-1}]_w^{\tiny{e}}\\
&=&(aw)^{k-1}[a(wa)^{k-1}]_w^{\tiny{e}}w[a(wa)^{k-1}]w[a(wa)^{k-1}]_w^{\tiny{e}}\\
&=&(aw)^{k-1}[a(wa)^{k-1}]_w^{\tiny{e}}=x,\\
xw(aw)^{k+1}&=&(awxw)(aw)^k=(aw)^k[a(wa)^{k-1}]_w^{\tiny{e}}w(aw)^k\\
&=&[a(wa)^{k-1}]_w^{\tiny{e}}w(aw)^k(aw)^k\\
&=&[a(wa)^{k-1}]_w^{\tiny{e}}[wa(wa)^{k-1}]^2w\\
&=&a(wa)^{k-1}w=(aw)^k.
\end{array}$$ Hence $a\in \mathcal{A}_w^{\tiny\textcircled{E}}$, as asserted.\end{proof}

\begin{cor} Let $a\in \mathcal{A}$. Then the following are equivalent:\end{cor}
\begin{enumerate}
\item [(1)] $a\in \mathcal{A}_w^{\tiny{E}}.$
\vspace{-.5mm}
\item [(2)] $(aw)^{k-1}a\in \mathcal{A}_w^{\#}$ and $(aw)^{k-1}a\mathcal{A}=\big((aw)^k\big)^*\mathcal{A}$ for some $k\in {\Bbb N}$.
\end{enumerate}
\begin{proof} Since $[(aw)^{k-1}a]w=(aw)^k$, we complete the proof by Theorem 5.7 and Corollary 2.5.\end{proof}

\begin{rem} Many results concerning weighted EP and pseudo-weighted EP elements remain valid in the broader context of associative rings with an involution. We omit a detailed discussion here, as it closely mirrors the case for Banach algebras.\end{rem}

 {\bf Data Availability}: No datasets were generated or analyzed during the current
 study.\\

 {\bf Conflict of Interest}: The authors declare no competing interests.

\vskip10mm
\end{document}